\numberwithin{equation}{section}
\theoremstyle{plain} 
\newtheorem{lemma}[equation]{Lemma} 
\newtheorem{proposition}[equation]{Proposition} 
\newtheorem{theorem}[equation]{Theorem} 
\newtheorem{corollary}[equation]{Corollary} 
\newtheorem{definition}[equation]{Definition}
\theoremstyle{remark}
\DeclareMathOperator{\Log}{Log}
\title{Endpoint $ \ell ^{r}$ improving estimates for Prime averages}
\author[Lacey]{Michael T. Lacey}   
\address{ School of Mathematics, Georgia Institute of Technology, Atlanta GA 30332, USA}
\email {lacey@math.gatech.edt}
\thanks{MTL: The author is a 2020 Simons Fellow. Research supported in part by grant  from the US National Science Foundation, DMS-1949206}
\author[Mousavi]{Hamed Mousavi}   
\address{ School of Mathematics, Georgia Institute of Technology, Atlanta GA 30332, USA}
\author[Rahimi]{Yaghoub Rahimi}
\address{ School of Mathematics, Georgia Institute of Technology, Atlanta GA 30332, USA}
\begin{document}
\begin{abstract}  
Let $ \Lambda $ denote von Mangoldt's function, and consider the averages 
\begin{align*}
A_N f (x) &=\frac{1}{N}\sum_{1\leq n  \leq N}f(x-n)\Lambda(n) . 
\end{align*}
We prove sharp $ \ell ^{p}$-improving for these averages, and sparse bounds for the maximal function.  
The simplest inequality is that for sets $ F, G\subset [0,N]$ there holds 
\begin{equation*}
N ^{-1} \langle A_N \mathbf 1_{F} , \mathbf 1_{G} \rangle \ll  \frac{\lvert  F\rvert \cdot \lvert  G\rvert} { N ^2 }
\Bigl( \operatorname {Log} \frac{\lvert  F\rvert \cdot \lvert  G\rvert} { N ^2 } \Bigr) ^{t}, 
\end{equation*}
where $ t=2$, or assuming the Generalized Riemann Hypothesis, $ t=1$.   The corresponding sparse bound is proved 
for the maximal function  $ \sup _N A_N \mathbf 1_{F}$.  The inequalities for $ t=1$ are sharp. 
The proof depends upon the Circle Method, and an interpolation argument of Bourgain.  
\end{abstract}
\maketitle

\tableofcontents 

\section{Introduction} 

We consider discrete averages over the prime integers.  The averages are  weighted by the von Mangoldt function.  
\begin{align}
A_N f (x) &=\frac{1}{N}\sum_{1\leq n  \leq N}f(x-n)\Lambda(n) 
\\
\Lambda(n)& =\begin{cases}
\log(p) & n=p^{a}, \textup{$ p$ prime}\\
0 & \textup{Otherwise}.
\end{cases}
\end{align}
Our interest is in \emph{scale free} $ \ell ^{r}$ improving estimates for these averages.  
The question presents itself in different forms.  

For an interval $ I$ in the integers and function $ f \;:\; I \to \mathbb C $, set  
\begin{equation}\label{e:bracket}
\langle f \rangle _{I, r} = \Bigl[ \lvert  I\rvert ^{-1} \sum_{x\in I} \lvert  f (x)\rvert ^{r}   \Bigr] ^{1/r} . 
\end{equation}
If $ r=1$, we will suppress the index in the notation.  
And, set $ \operatorname {Log} x = 1 + \lvert  \log x\rvert $, for $ x>0$.

The kind of estimate we are interested in takes the   the following form, in the simplest instance.
What is the `smallest' function $ \psi \;:\; [0,1] \to [1, \infty )$ 
so that for all integers $ N$ and  indicator functions $ f , g \;:\; I \to \{0,1\}$, there holds 
\begin{equation*} 
N ^{-1} \langle A_N f, g  \rangle  \leq \langle f \rangle _{I} \langle g \rangle _{I} \psi  (\langle   f\rangle_I  \langle g \rangle_I). 
\end{equation*}
That is, the right hand side  is independent of $ N$, making it scale-free.  We specified that $ f, g$ be indicator functions as that is sometimes  the sharp form of the inequality. 
Of course it is interesting for arbitrary functions, but the bound above is not homogeneous, so not the most natural estimate in that case.  

The points of interest in these two results arises from, on the one hand, the distinguished role of the prime integers.  
And, on the other, endpoint results are significant interest in Harmonic Analysis, as the techniques which apply are the sharpest possible. 
In this instance, the sharp methods depend very much on the prime numbers. 

For the primes, we expect that the Riemann Hypothesis to be relevant.    
We state unconditional results, and those that depend upon the Generalized Riemann Hypothesis (GRH). 
Note that according to GRH all zeroes in the critical strip $0<Re(s)<1$ of an arbitrary $L-$function $L(f,s)$ are on the critical line $Re(s)=\frac{1}{2}$. 
Under GRH,  the primes are equitably distributed mod $ q$, with very good error bounds.  
Namely, 
\begin{equation} \label{e:primecount}
\psi(x,q,a)=\sum_{\substack{n<x\\ n\equiv a\pmod{q}}}\Lambda(n)=\frac{x}{\phi(q)}+O(x^{\frac{1}{2}}\log^2(q)).
\end{equation}

\begin{theorem}\label{t:first}  There is a constant $ C$ so that this holds.  
For integers $ N > 30$, and interval $ I$ of length $ N$, the following inequality holds 
for all functions $ f=   \mathbf 1_{F}$ and $  g  =  \mathbf 1_{G}$ with $ F, G\subset I$
\begin{equation}\label{e:first}
N ^{-1} \langle A_N f, g \rangle \leq C \langle f \rangle _{I} \langle g \rangle _{I} 
\times 
\begin{cases}
 \operatorname {Log} (\langle f \rangle _{I} \langle g \rangle _{I})  &  \textup{assuming GRH}
 \\
  (\operatorname {Log}    (\langle f \rangle _{I} \langle g \rangle _{I})  ) ^{t}& 
\end{cases}
\end{equation}
\end{theorem}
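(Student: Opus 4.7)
The plan is to dissect the Fourier multiplier of $A_N$ via the Hardy--Littlewood circle method, compare against the unweighted averaging operator, and close via the interpolation argument of Bourgain referenced in the abstract. Setting $\alpha = \langle \mathbf{1}_F\rangle_I$ and $\beta = \langle \mathbf{1}_G\rangle_I$, first introduce the flat average $B_N f(x) = N^{-1}\sum_{n=1}^N f(x-n)$. A direct Fubini computation yields the trivial improving bound $N^{-1}\langle B_N \mathbf{1}_F,\mathbf{1}_G\rangle \le \alpha\beta$. Writing $A_N = B_N + D_N$, the task reduces to bounding $N^{-1}\langle D_N \mathbf{1}_F,\mathbf{1}_G\rangle$ by $\alpha\beta\,(\operatorname{Log}(\alpha\beta))^t$.

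Fix a parameter $Q = Q(\alpha\beta)$ and Farey-dissect $\mathbb{T}$ into major arcs $\mathfrak{M}_Q$ around rationals $a/q$ with $1\le q\le Q$, $(a,q)=1$, together with complementary minor arcs $\mathfrak{m}_Q$. Correspondingly split $D_N = D_N^{\mathrm{maj}} + D_N^{\mathrm{min}}$ according to the support of the Fourier multiplier. On the arc near $a/q$, the classical circle-method expansion gives
\[
\sum_{n\le N}\Lambda(n)\,e\bigl(n(a/q+\eta)\bigr) \;=\; \frac{\mu(q)}{\phi(q)}\sum_{n\le N}e(n\eta) \;+\; E(\eta,q,a),
\]
with the error $E$ controlled by Siegel--Walfisz unconditionally or by \eqref{e:primecount} under GRH. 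Thus $D_N^{\mathrm{maj}}$ is, up to this error, a weighted superposition of modulated flat averages with weights $\mu(q)/\phi(q)$. Since modulation preserves $\ell^p$-norms, invoking the trivial improving bound for $B_N$ shows each $q$ contributes at most $\alpha\beta/\phi(q)$ to the bilinear form, so the total is of order $\alpha\beta \log Q$.

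For the minor arcs, Vinogradov-type sup-norm bounds yield $\sup_{\xi\in\mathfrak{m}_Q}|m_N(\xi)|\ll \delta_N(Q)$, where $\delta_N(Q)$ decays as $Q$ grows; under GRH the decay is power-saving by virtue of \eqref{e:primecount}. This gives an $\ell^2\to\ell^2$ bound $\|D_N^{\mathrm{min}}\|_{\ell^2\to\ell^2}\ll \delta_N(Q)$. Bourgain's interpolation trick then pairs this $\ell^2$ decay with the trivial $\ell^\infty\to\ell^\infty$ estimate to yield, for indicators, $N^{-1}\langle D_N^{\mathrm{min}}\mathbf{1}_F,\mathbf{1}_G\rangle \ll \alpha\beta$ provided $Q$ stays below a threshold prescribed by $\alpha\beta$.

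The final step is to balance $\log Q$ against $\delta_N(Q)$. Under GRH the power savings from \eqref{e:primecount} permit $\log Q \asymp \operatorname{Log}(\alpha\beta)$, producing the sharp $t=1$ bound. Unconditionally, only Siegel--Walfisz savings are available, and the balancing costs an additional logarithm, yielding $t=2$. I expect the main obstacle to be precisely this balance: one must track quantitatively how the minor-arc $\ell^2$ gain depends on $Q$ and $N$, while arranging the final estimate to be \emph{scale-free}, in the sense that the constant depends only on $\alpha\beta$ and not on $N$. Under GRH this trade-off is clean; unconditionally one must appeal to Bombieri--Vinogradov-strength averaging over $q$ to absorb the weaker pointwise input and still recover $t=2$.
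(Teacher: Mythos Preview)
The central gap is your major-arc estimate. When you claim that ``each $q$ contributes at most $\alpha\beta/\phi(q)$'', this does not follow from ``modulation preserves $\ell^p$-norms''. For fixed $q$ there are $\phi(q)$ reduced fractions $a/q$, each contributing a modulated copy of $B_N$ with coefficient $\mu(q)/\phi(q)$; taking absolute values inside and applying the trivial improving bound for $B_N$ gives $\alpha\beta/\phi(q)$ \emph{per pair} $(a,q)$, hence $\alpha\beta$ per $q$ after summing over $a\in A_q$, and then $Q\,\alpha\beta$ after summing over $q\le Q$. With a major-arc bound of order $Q\,\alpha\beta$ the balancing against the minor-arc $\ell^2$ gain no longer closes. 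To recover $\log Q$ one must exploit the cancellation among the $\phi(q)$ modulations: the sum over $a$ produces the Ramanujan sum $c_q(x)$ in the spatial kernel, and the paper proves (Lemma~\ref{l:Lo}) that $\bigl\lVert \sum_{q\in\mathbb S_Q}\tfrac{\mu(q)}{\phi(q)}c_q\bigr\rVert_\infty \ll \log Q$ via multiplicativity. This is precisely why the Low term \eqref{e:Lo} is defined as a sum over \emph{all} $Q$-smooth square-free $q$ with no cap on the size of $q$, rather than over $q\le Q$; with the naive cut-off $q\le Q$ the $\ell^\infty$ bound on the Ramanujan kernel fails.

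A second gap concerns the admissible range of $Q$ in the unconditional case. Siegel--Walfisz controls the major-arc error $E(\eta,q,a)$ only for $Q\le(\log N)^A$, which is far too small to reach the value of $Q\sim(\alpha\beta)^{-1/2}$ demanded by the balancing when $\alpha\beta$ is of moderate size. The paper instead uses Page's theorem \eqref{e:page}, which allows $Q$ up to $e^{c\sqrt{\log N}}$ at the price of an explicit exceptional-character contribution, isolated as $\operatorname{Ex}_{Q,N}$ in \eqref{e:Ex} and bounded separately in Lemma~\ref{l:Ex}. Your closing appeal to Bombieri--Vinogradov does not repair this: that theorem gives savings only after averaging in $q$, whereas here one needs a pointwise-in-$\xi$ bound on the multiplier. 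The extra logarithm in the unconditional exponent $t=2$ arises in the paper not from a weaker minor-arc input but from the regime $\alpha\beta\ll e^{-c\sqrt{\log N}}$, where even Page's range is exhausted and one must fall back on the trivial estimate $N^{-1}\langle A_Nf,g\rangle\ll(\log N)\,\alpha\beta$.
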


The inequality assuming GRH is sharp, as can be seen by taking $ f $ to be  the indicator of the primes, and $ g=\mathbf 1_{ 0}$.  
It is also desirable to have a form of the inequality  above that holds for the maximal function 
\begin{equation*}
A ^{\ast} f = \sup _{N} \lvert  A_N f\rvert.   
\end{equation*}
Our second main theorem is  sparse bound for $ A ^{\ast} $. The definition of a sparse bound is postponed to  Definition~\ref{d:sparse}.  
Remarkably, the inequality takes the same general form, although we consider a substantially larger operator.  

\begin{theorem}\label{t:sparseA} 
For functions  $   f  = \mathbf 1_{F}$ and $ g=  \mathbf 1_{G}$,  for finite sets $ F, G \subset \mathbb Z $, there is a sparse collection of intervals $ \mathcal S$ so that we have 
\begin{equation}\label{e:sparseA}
\langle A ^{\ast} f, g \rangle \lesssim \sum_{I\in \mathcal S}  \langle f \rangle _{I  }  \langle g \rangle _{I }  
(\operatorname {Log} \langle f \rangle _{I  }  \langle g \rangle _{I }   ) ^{t}
\lvert  I\rvert, 
\end{equation}
where we can take $ t=1$ under GRH, and otherwise we take $ t=2$.  
\end{theorem}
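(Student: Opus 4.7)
The plan is to derive the sparse bound from the single-scale bilinear inequality \eqref{e:first} of Theorem \ref{t:first} via the by-now standard Calder\'on--Zygmund stopping-time construction that allows one to pass from scale-free $\ell^r$-improving estimates to sparse bounds on the associated maximal operator. I would assume $F, G$ are contained in a large dyadic interval $I_0$, and linearize by a measurable choice of scale $N(x)$ with $A^* f(x) \leq 2 A_{N(x)} f(x)$.

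I would build the sparse collection $\mathcal{S}$ recursively. Set $\mathcal{S}_0 = \{I_0\}$; for each $I \in \mathcal{S}$, let its children be the maximal subintervals $J \subset I$ on which either $\langle f \rangle_J > C_0 \langle f \rangle_I$ or $\langle g \rangle_J > C_0 \langle g \rangle_I$. A standard Chebyshev argument gives $\lvert \bigcup \mathrm{ch}(I) \rvert \leq \lvert I \rvert /2$ for $C_0$ large, so $\mathcal{S}$ is sparse with disjoint witnesses $E_I := I \setminus \bigcup \mathrm{ch}(I)$. For each $x \in G$ let $I(x) \in \mathcal{S}$ be the minimal interval containing the window $[x - N(x), x]$, and set $G_I := \{x \in G : I(x) = I\}$. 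The sets $\{G_I\}$ partition $G$, and
\begin{equation*}
  \langle A^* f, g \rangle \leq 2 \sum_{I \in \mathcal{S}} \sum_{x \in G_I} A_{N(x)} f(x).
\end{equation*}
It then suffices to establish the per-interval estimate
\begin{equation*}
  \sum_{x \in G_I} A_{N(x)} f(x) \lesssim \lvert I \rvert \, \langle f \rangle_I \langle g \rangle_I \bigl( \operatorname{Log}(\langle f \rangle_I \langle g \rangle_I) \bigr)^{t}.
\end{equation*}

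For the per-interval bound I would exploit the stopping property: for $x \in G_I$ the window $[x - N(x), x]$ is not contained in any child of $I$, hence the restricted averages of $f$ and $g$ on the window are $\lesssim \langle f \rangle_I$ and $\lesssim \langle g \rangle_I$, up to a bounded enlargement $I^*$ of $I$ that absorbs windows protruding past $I$. At each dyadic scale $N \sim 2^k \leq \lvert I^* \rvert$, I would tile $I^*$ by intervals of length $2^k$, apply Theorem \ref{t:first} on each tile with localized data, and sum over tiles to get the correctly scaled bilinear estimate. The main obstacle is summing over the scales $k$ without losing an additional $\log \lvert I \rvert$ that would spoil the advertised exponent $t$ on the logarithm. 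I would address this by pigeonholing $G_I$ into dyadic layers according to $N(x)$ and observing that only layers with $N(x)$ comparable to $\lvert I \rvert$ (or to the sizes of the children) genuinely contribute at the $I$ level, while the remaining layers are either forbidden by the definition of $I(x)$ or are absorbed into the recursion at the children; equivalently, one refines the stopping time to stop also where $A_{N(x)} f(x)$ exceeds the height predicted by $\langle f \rangle_I$. Summing the per-interval estimates over $I \in \mathcal{S}$ then yields \eqref{e:sparseA} with the same exponent $t$ as in \eqref{e:first}.
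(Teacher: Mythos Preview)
Your outer recursion (linearize, stop on $\langle f\rangle$ and $\langle g\rangle$, reduce to the per-interval estimate
\[
\sum_{x\in G_I} A_{N(x)}f(x) \lesssim |I|\,\langle f\rangle_I\langle g\rangle_I\bigl(\Log\langle f\rangle_I\langle g\rangle_I\bigr)^t
\]
for admissible $\tau=N(\cdot)$) is exactly the reduction the paper makes; this per-interval bound is the paper's Lemma~\ref{l:admissible}.  The gap is in how you propose to prove that bound.

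Black-boxing Theorem~\ref{t:first} tile-by-tile at each dyadic scale $2^k\le |I|$ does not close.  At scale $2^k$ one gets, after using the stopping control $\langle f\rangle_{J'}\lesssim\langle f\rangle_I$, $\langle g\rangle_{J'}\lesssim\langle g\rangle_I$ and the monotonicity of $x(\Log x)^t$, a bound $\lesssim |I|\,\langle f\rangle_I\langle g\rangle_I(\Log\langle f\rangle_I\langle g\rangle_I)^t$ that is \emph{independent of $k$}.  Summing over $k$ costs a full factor $\log|I|$, not $(\Log\langle f\rangle_I\langle g\rangle_I)$.  Your suggested escape---that only scales $N(x)\sim|I|$ contribute---is false: every $x\in G\cap E_I$ has $I(x)=I$ regardless of how small $N(x)$ is, so all $\log|I|$ scales genuinely appear.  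The alternative fix (``refine the stopping to stop also where $A_{N(x)}f$ is large'') begs the question, since the only a priori pointwise control on $A_{N(x)}f$ is $\log N(x)$ times the standard average.

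The paper does \emph{not} prove Lemma~\ref{l:admissible} from Theorem~\ref{t:first}.  It reopens the High/Low/Exceptional decomposition at the level of the linearized maximal operator and invokes a separate ingredient, the $\ell^2$ \emph{maximal} bound \eqref{e:HiSup} for the High piece (a consequence of Bourgain's multi-frequency inequality, Lemma~\ref{l:BMF}).  That bound is only a single $\log Q$ worse than the fixed-scale High estimate \eqref{e:HiFixed}, and $\log Q\sim \Log\langle f\rangle_I\langle g\rangle_I$ after optimization---this is precisely what replaces the uncontrolled $\log|I|$ your approach produces.  The Low and Exceptional pieces are handled pointwise via admissibility and Lemmas~\ref{l:Lo}, \ref{l:Ex}, with no sum over scales at all.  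You need this maximal High estimate (or an equivalent device); the fixed-scale theorem alone is not enough.
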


The sparse bound is very strong, implying weighted inequalities for the maximal operator $ A ^{\ast} $.  
These inequalities could be further quantified, but we do not detail those consequences, as they are essentially known.   
See \cite{MR3897012}.   
One way to see that the sparse bound is stronger is  these inequalities are a corollary.  

\begin{corollary}\label{c:logell}  The maximal operator $ A ^{\ast} $ satisfies these inequalities, where $ t=1$ under GRH, and $ t=2$ otherwise. 
First, a sparse bound with $ \ell ^{p}$ norms.  For all $ 1< p < 2$, there holds 
\begin{equation}\label{e:p-sparse}
\langle A ^{\ast} \mathbf 1_{F}, \mathbf 1_{G} \rangle \lesssim  (p-1) ^{-t} \sup _{\mathcal S}
\sum_{I\in \mathcal S}  \langle \mathbf 1_{F} \rangle _{I ,p }  \langle \mathbf 1_{G} \rangle _{I,p }  
\lvert  I\rvert. 
\end{equation}
Second, the restricted weak-type inequalities 
\begin{equation}  \label{e:RWT}
\sup _{0< \lambda < 1} \frac  \lambda { (\Log \lambda ) ^t }  \lvert   \{A ^{\ast} \mathbf 1_{F} > \lambda \}\rvert  \lesssim \lvert  F\rvert  . 
\end{equation}
Third, the weak-type inequality below holds for finitely supported non-negative functions $ f$ on $ \mathbb Z $
\begin{equation} \label{e:orlicz}
\sup _{\lambda >0} \lambda \lvert   \{A ^{\ast} f > \lambda \}\rvert \lesssim  \lVert f\rVert _{ \ell (\log \ell ) ^{t} (\log\log \ell )}
\end{equation}
where the last norm is defined in \S\ref{s:corollary}.  
\end{corollary}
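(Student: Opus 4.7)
The three inequalities of Corollary~\ref{c:logell} are standard consequences of the sparse bound in Theorem~\ref{t:sparseA}. I would establish them in the order stated, with Part~1 serving as a pointwise bridge that feeds into Parts~2 and~3.

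For Part~1, use the elementary pointwise inequality
\[
s\,(\Log s)^{t} \;\leq\; C_{t}\,(p-1)^{-t}\,s^{1/p}, \qquad s \in (0,1],\ \ 1 < p < 2.
\]
This is proved by substituting $y = \Log s \geq 1$ and maximizing $y^{t}e^{-(y-1)(p-1)/p}$ over $y$; the critical point $y^{\ast} = tp/(p-1)$ produces a maximum of order $(p-1)^{-t}$. Applying this with $s = \langle\mathbf 1_{F}\rangle_{I}\langle\mathbf 1_{G}\rangle_{I}$ inside \eqref{e:sparseA}, together with the identity $\langle\mathbf 1_{E}\rangle_{I} = \langle\mathbf 1_{E}\rangle_{I,p}^{p}$ for indicators, converts the sparse bound termwise into \eqref{e:p-sparse}.

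For Part~2, combine Part~1 with Chebyshev's inequality. Standard sparse-operator theory promotes the $\ell^{p}$-sparse bound \eqref{e:p-sparse} to the $L^{p}$-norm estimate $\|A^{\ast}\mathbf 1_{F}\|_{\ell^{p}} \lesssim (p-1)^{-t}|F|^{1/p}$ for $p$ just above $1$; Chebyshev then yields
\[
|\{A^{\ast}\mathbf 1_{F} > \lambda\}| \;\leq\; \lambda^{-p}\|A^{\ast}\mathbf 1_{F}\|_{\ell^{p}}^{p} \;\lesssim\; \lambda^{-p}(p-1)^{-tp}|F|.
\]
Choosing $p = 1 + 1/\log(1/\lambda)$ balances the two factors ($\lambda^{-p} \lesssim \lambda^{-1}$ while $(p-1)^{-tp} \lesssim (\Log\lambda)^{t}$), giving \eqref{e:RWT}. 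Part~3 follows by a Yano-style extrapolation: dyadically decompose $f = \sum_{k}2^{k}\mathbf 1_{E_{k}}$ along its level sets, apply Part~2 to each $E_{k}$ with summable weights $c_{k}$, and match the resulting sum against the Young function $\Phi(s) = s(\log s)^{t}\log\log s$; the $\log\log L$ factor is precisely what absorbs the $c_{k}^{-1}$ bookkeeping.

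The principal care in this plan is in Part~2: one must track the $p$-dependence carefully when passing from the $\ell^{p}$-sparse form to the $L^{p}$-norm estimate, so that the sharp constant $(p-1)^{-t}$ is preserved without accruing an additional $(p-1)^{-1}$ factor (which would worsen \eqref{e:RWT} to $(\Log\lambda)^{t+1}$ and degrade Part~3 accordingly). This is achieved either by an off-diagonal duality using the full sparse bound \eqref{e:sparseA} directly, or by interpolating Part~1 between two nearby values of $p$. Once the sharp $L^{p}$-constant is pinned down, the Chebyshev optimization and the dyadic summation for Part~3 are mechanical.
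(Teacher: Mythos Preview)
Your treatment of Parts~1 and~3 matches the paper's. For Part~1 the paper also records the pointwise inequality $x(\Log x)^t \ll (p-1)^{-t} x^{1/p}$ and applies it termwise; for Part~3 it does the same level-set decomposition and invokes the Stein--Weiss logarithmic subadditivity of the weak $\ell^1$ quasinorm, which is precisely your ``$c_k^{-1}$ bookkeeping'' producing the extra $\log\log$.

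For Part~2 your primary route (promote \eqref{e:p-sparse} to an $L^p$ norm estimate, then Chebyshev, then optimize in $p$) does suffer the loss you anticipate: the $(p,p)$ sparse form $\Lambda_{p,p}$ is \emph{not} bounded on $L^p\times L^{p'}$, and retreating to $\Lambda_{p_0,p_0}$ with $p_0=(1+p)/2$ costs $\lVert M_{p_0}\rVert_{L^p\to L^p}\sim (p-p_0)^{-1}\sim (p-1)^{-1}$ on the $F$ side, so you land at $(p-1)^{-t-1}$ and hence $(\Log\lambda)^{t+1}$. Your interpolation fix~(b) does not obviously escape this. Your fix~(a)---return to \eqref{e:sparseA} and argue by duality---is the correct move, and it is exactly what the paper does. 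Concretely: for $G=\{A^\ast \mathbf 1_F>\lambda\}$, the paper removes from $G$ the set $\{M\mathbf 1_F > K\,|F|/|G|\}$, leaving a major subset $G'$ on which every contributing sparse interval satisfies $\langle \mathbf 1_F\rangle_I\lesssim |F|/|G|$; it then pigeonholes the sparse collection according to the dyadic sizes of $\langle \mathbf 1_F\rangle_I$ and $\langle \mathbf 1_{G'}\rangle_I$, and sums using sparseness and the weak $(1,1)$ bound for $M$ to control $\sum_{I\in\mathcal S_{j,k}}|I|$. This gives $\langle A^\ast \mathbf 1_F,\mathbf 1_{G'}\rangle \ll |F|\,(\Log |F|/|G|)^t$ directly, with no detour through an $L^p$ operator norm.
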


This subject is an outgrowth of Bourgain's  fundamental work on arithmetic ergodic theorems \cite{MR937582,MR1019960}. 
These inequalities proved  therein focused on the diagonal case, principally $ \ell ^{p}$ to $ \ell ^{p}$ estimates for maximal functions.  
Bourgain's work has been very influential, with a very rich and sophisticated theory devoted to the diagonal estimates. 
We point to just two papers,  \cite{MR2188130}, and very recently \cite{2020arXiv200805066T}.  The subject is very rich, and the reader should consult the references in these papers.

Shortly after Bourgain's first results, Wierdl \cite{MR995574} studied the primes, and the simpler form of the Circle method in that case allowed 
him to prove diagonal inequalities for all $ p>1$, which was a novel result at that time.  
The result was revisited by Mirek and Trojan \cite{MR3370012}.   
The  unconditional version of the endpoint result \eqref{e:RWT} above is the main result of  Trojan \cite{MR4029173}. 
The approach of this paper differs in some important aspects from the one in \cite{MR4029173}. 
(The low/high decomposition is dramatically different, to point to the single largest difference.) 

The subject of sparse bounds originated in harmonic analysis, with a detailed set of applications in the survey \cite{2018arXiv181200850P}, 
with a wide set of references therein.  
The paper  \cite{MR3892403} initiated the study of  sparse bounds in the discrete setting. 
While the result in that paper of an `$ \epsilon $ improvement' nature, for averages it turns out there are very good results 
available, as was first established for the discrete sphere in \cites{MR4064582,MR4149830}. 
There is a rich theory here, with a range of inequalities for the Magyar-Stein-Wainger \cite{MR1888798} maximal function in \cite{MR4041278}. 
Nearly sharp results for certain polynomial averages  are established in \cites{2019arXiv190705734H,2020arXiv200211758D}, 
and a surprisingly good estimate for arbitrary polynomials is in \cite{MR4106792}. 
The latter result plays an interesting role in the innovative result of Krause, Mirek and Tao \cite{2020arXiv200800857K}.  

The $ \ell ^{p}$ improving property for the primes was investigated in \cite{MR4072599}, but not at the endpoint. 
That paper result established the first weighted estimates for the averages for the prime numbers.  This paper establishes the 
sharp results, under GRH.  Mirek \cite{MR3375866} addresses the diagonal case for {P}iatetski-{S}hapiro primes. 
It would be interesting to obtain $ \ell ^{p}$ improving estimates in this case. 

\smallskip 
Our proof uses the Circle Method to approximate the Fourier multiplier, following Bourgain \cite{MR937582}. 
In the unconditional case, we use Page's Theorem, which leads to the appearance of exceptional characters in the Circle method.  
Under GRH, there are no exceptional characters, and one can identify, as is well known, a very good approximation to the multiplier.

The Fourier multiplier is decomposed at the end of \S\ref{s:Approx} in such a way to fit    an interpolation argument of Bourgain \cite{MR812567}, also see \cite{MR2053347}. We call it the High/Low Frequency method. 
To acheive the endpoint results, this decomposition has to be carefully phrased.  
There are two additional features of this decomposition we found necessary to add in.  
First, certain difficulties associated with Ramanujan sums are addressed by making a significant change to a 
Low Frequency term.  
The sum defining the Low Frequency term \eqref{e:Lo} is over all $ Q$-smooth square free denominators.  
Here, the integer $ Q$ can vary widely, as small as $ 1$ and as large as $ N ^{1/10}$, say.  
(The largest $ Q$-smooth square denominator will be of the order of $ e ^{Q}$.)    
Second, in the unconditional case, the exceptional characters are grouped into their own term.  As it turns out, they can be viewed 
as part of the Low Frequency term.  
The properties we need for the High/Low method are detailed in \S\ref{s:High}. The following sections are applications of those properties. 

\section{Notation} \label{s:notation}
\section{Notation} \label{s:notation}

We write $ A \ll B$ if there is a constant $ C$ so that $ A \leq C B$.  In such instances, the exact nature of the constant is not important.

Let $ \mathcal F$ denote the Fourier transform on $ \mathbb R $, defined for by 
\begin{equation*}
\mathcal F f (\xi ) = \int _{\mathbb R } f (x) e ^{- 2 \pi i x \xi } \; dx , \qquad  f\in L ^{1} (\mathbb R ). 
\end{equation*}
The Fourier transform on $ \mathbb Z $ is denoted by $ \widehat f  $, defined by 
\begin{equation*}
\widehat f (\xi ) = \sum_{n\in \mathbb Z } f (n) e ^{- 2 \pi i n \xi }, \qquad f\in \ell ^{1} (\mathbb Z ).   
\end{equation*}

 Throughout, we denote $ A _q = \{ a \in \mathbb Z /q \mathbb Z  \;:\; (a,q)=1\}$, so that $ \lvert  A _q\rvert = \phi (q) $, the totient function.   
 We have 
 \begin{equation}\label{e:totient}
 \frac{q} {\Log\Log q } \ll \phi (q) \leq q -1.  
\end{equation}
It is known that for non-principal characters$\chi $, 
we have  $|G(\chi,a)|<q^{-\frac{1}{2}}$, see \cite{MR2061214}*{Chapter 3}. In particular, if $\chi$ is the principal character, then we get Ramanujan's sum 
\begin{align} \label{e:ramDef}
c_q(n):=\phi(q)G( \mathbf 1_{A_q},a)=\sum_{r \in A_q}e\bigl(\frac{ra}{q}\bigr).
\end{align}


Let  $ \chi _q$ denote the exceptional character. 
It is a non-trivial quadratic  Dirichlet character modulo $ q$, that is  $ \chi _q $ takes values $ -1,0,1$, and takes the value $ -1$ at least once.  
We also know that $ \chi _q$ is primitive, namely that its period is $ q$.  
As a matter of convenience, if $ q$ does not have an exceptional character, we will set $ \chi _{q} \equiv 0$, and $ \beta _q =1$.  
These properties are important to Lemma~\ref{l:exceptional}.  

Page's Theorem uses the exceptional characters to give an approximation to the prime counting function.   
Counting primes in an arithmetic progression of modulus $ q$, we have 
\begin{gather} \label{e:page}
\psi (N; q, r) - \frac{N} {\phi (q)} + \frac{\chi _q (x)} {\phi (q)} \beta ^{-1}_q  x ^{\beta_q }  \ll N e ^{ c \sqrt{\log N}    } . 
\end{gather}

\section{Approximations of the Kernel} \label{s:Approx}

Denote the kernel of $ A_N$ with the same symbol, so that $ A_N (x) = N ^{-1} \sum_{n\leq N} \Lambda (n) \delta _{n} (x)$. 
It follows that 
\begin{equation*}
\widehat {A_N} (\xi ) =  \frac{1}N\sum_{n \leq N} \Lambda (n) e ^{- 2 \pi n \xi }. 
\end{equation*}
The core of the paper is the approximation to $ \widehat {A_N} (\xi )$, and its further properties, detailed in the next section. 

 Set 
\begin{equation}\label{e:M}
M _{N} ^{\beta } = \frac{1}{N \beta } \sum_{n\leq N}  [ n ^{\beta } - (n-1) ^{\beta }] \delta _n , \qquad  \tfrac1{2} < \beta \leq 1. 
\end{equation}
We write $ M_N = M _{N} ^{1}$ when $ \beta =1$, which is the standard average. 
For $   \beta < 1$, these are not averaging operators.  They are the operators associated to the exceptional characters. 
The Fourier transforms are straight forward to estimate. 

\begin{proposition}\label{p:M}  We have the estimates 
\begin{gather}\label{e:1Hat}
\lvert  \widehat {M_N} (\xi ) \rvert  \ll  \min \{ 1,  (N \lvert  \xi \rvert ) ^{-1} \}, 
\\  \label{e:betaHat1}
\lvert  \widehat {M_N ^{\beta }} (\xi ) \rvert  \ll   (N \lvert  \xi \rvert ) ^{-1},
\\  \label{e:betaHat2}
\lvert  \widehat {M_N ^{\beta }} (\xi ) - \beta ^{-1}  N ^{\beta -1}  \rvert  \ll   N ^{\beta }\lvert  \xi \rvert . 
\end{gather}
\end{proposition}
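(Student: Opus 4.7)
All three estimates are elementary Fourier decay bounds for the $\delta$-measure $M_N^\beta$, so the plan is to handle them in turn using standard geometric sums and Abel summation. The one conceptual observation used throughout is that for $\tfrac12 < \beta \leq 1$, the coefficients $a_n := n^\beta - (n-1)^\beta$ are positive, monotonically non-increasing in $n$, and telescope to $\sum_{n \leq N} a_n = N^\beta$.

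For \eqref{e:1Hat}, the bound by $1$ is immediate from the triangle inequality since the coefficients are positive and average to $1$. For the decay bound, I would compute $\widehat{M_N}(\xi) = \tfrac{1}{N}\sum_{n=1}^N e^{-2\pi i n\xi}$ as a geometric series; the magnitude reduces to $\tfrac{1}{N}|1-e^{-2\pi i N\xi}|/|1-e^{-2\pi i\xi}|$, and using $|1-e^{-2\pi i \xi}| \gtrsim |\xi|$ for $|\xi| \leq 1/2$ yields the $(N|\xi|)^{-1}$ bound.

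For \eqref{e:betaHat1}, the geometric series identity is unavailable, so I would use Abel summation. Writing $S_n(\xi) = \sum_{k=1}^n e^{-2\pi i k\xi}$, which satisfies the uniform bound $|S_n(\xi)| \lesssim |\xi|^{-1}$ from the previous step, we have
\begin{equation*}
\sum_{n=1}^N a_n e^{-2\pi i n\xi} = a_N S_N(\xi) + \sum_{n=1}^{N-1}(a_n - a_{n+1}) S_n(\xi).
\end{equation*}
Since $a_n$ is non-increasing, $\sum_{n=1}^{N-1}(a_n - a_{n+1}) = a_1 - a_N \leq 1$, so the whole sum is $O(|\xi|^{-1})$. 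Dividing by $N\beta$ and using $\beta > 1/2$ gives \eqref{e:betaHat1}.

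For \eqref{e:betaHat2}, the key observation is that $\widehat{M_N^\beta}(0) = (N\beta)^{-1}\sum_{n=1}^N a_n = N^{\beta-1}/\beta$ by telescoping, so the claim is Lipschitz regularity of $\widehat{M_N^\beta}$ at the origin. Writing the difference as
\begin{equation*}
\widehat{M_N^\beta}(\xi) - \beta^{-1}N^{\beta-1} = \frac{1}{N\beta}\sum_{n=1}^N a_n\bigl(e^{-2\pi i n\xi} - 1\bigr),
\end{equation*}
and bounding $|e^{-2\pi i n\xi}-1| \leq 2\pi n|\xi|$, the question reduces to $\sum_{n=1}^N n\, a_n \ll N^{\beta+1}$. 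By the mean value theorem $a_n \leq \beta (n-1)^{\beta-1}$ for $n \geq 2$, which together with $(n-1)^{\beta-1} \ll n^{\beta-1}$ (valid since $\beta > 1/2$ and $n \geq 2$) gives $n a_n \ll n^\beta$; summing from $1$ to $N$ produces $O(N^{\beta+1})$, which yields \eqref{e:betaHat2}.

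There is no substantive obstacle here: the whole proposition is a routine warm-up computation that records the elementary Fourier-analytic properties of $M_N^\beta$ needed as building blocks for the circle-method decomposition in the sections that follow. The mildly delicate point is just keeping the constants independent of $\beta$, which is handled by the assumption $\beta > 1/2$.
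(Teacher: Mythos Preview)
Your argument is correct and complete. The paper itself does not supply a proof of this proposition, remarking only that ``the Fourier transforms are straight forward to estimate''; your geometric-series computation for \eqref{e:1Hat}, Abel summation exploiting the monotonicity of $a_n = n^{\beta}-(n-1)^{\beta}$ for \eqref{e:betaHat1}, and the Lipschitz bound via $\lvert e^{-2\pi i n \xi}-1\rvert \leq 2\pi n \lvert\xi\rvert$ together with $\sum_{n\leq N} n\,a_n \ll N^{\beta+1}$ for \eqref{e:betaHat2} are exactly the standard calculations that justify that remark.
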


For integers $ q$ and $ a \in A _q$, 
\begin{gather}\label{e:Laq}
\widehat {L ^{a,q}_N  } (\xi )  =   G (\mathbf 1_{A _q}, a) \widehat {M_N} (\xi ) - G (\chi _q ,a) \widehat {M ^{\beta _q} _N } (\xi   ) 
\end{gather}

We state the approximation to the kernel at rational point, with small denominator.  

\begin{lemma}\label{PNTlemma}
Assume that $|\xi-\frac{a}{q}|\leq N^{-1}Q$ for some $1\leq a\leq q\leq Q$ and $\gcd(a,q)=1$. Then 
\begin{align}  \label{e:PNTlemma}
\widehat{A_N}(\xi)=\widehat {L ^{a,q}_N  } (\xi - \tfrac{a}{q})+
\bigg\{\begin{array}{lr}
       O(QN^{-\frac{1}{2}+\epsilon}),  &  \text{ Assuming GRH}\\
        O(Qe^{-c\sqrt{n}}), &  \text{ Otherwise} 
        \end{array}
\end{align}
\end{lemma}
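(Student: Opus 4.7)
The strategy is a standard circle-method computation. Writing $\xi = a/q + \eta$ with $|\eta| \leq Q/N$, split the defining sum of $\widehat{A_N}(\xi)$ into residue classes modulo $q$:
\[
N\widehat{A_N}(\xi) = \sum_{r=0}^{q-1} e(-ra/q)\sum_{\substack{n\leq N\\ n\equiv r\,(\mathrm{mod}\, q)}}\Lambda(n)e(-n\eta).
\]
The residues with $\gcd(r,q)>1$ contribute only prime powers of primes dividing $q$, yielding a total of $O((\log q)(\log N))$, which is absorbed by the error term in \eqref{e:PNTlemma}. For $r\in A_q$, Abel summation rewrites the inner sum as the Stieltjes integral $\int_0^N e(-t\eta)\,d\psi(t;q,r)$, and the task becomes substituting an asymptotic expansion of $\psi(t;q,r)$.

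Under GRH, \eqref{e:primecount} supplies the main term $t/\phi(q)$ with error $O(t^{1/2}\log^2(qt))$; unconditionally, Page's theorem \eqref{e:page} adds $-\chi_q(r)\beta_q^{-1}t^{\beta_q}/\phi(q)$ with error $O(te^{-c\sqrt{\log t}})$. Inserting these main terms and summing over $r\in A_q$ against $e(-ra/q)$, the identities
\[
\frac{1}{\phi(q)}\sum_{r\in A_q}e(-ra/q) = G(\mathbf{1}_{A_q},-a), \qquad \frac{1}{\phi(q)}\sum_{r\in A_q}\chi_q(r)e(-ra/q) = G(\chi_q,-a)
\]
yield the Gauss sum coefficients, which agree with $G(\mathbf{1}_{A_q},a)$ and $G(\chi_q,a)$ up to the symmetry of the Ramanujan sum $c_q$ and the real, quadratic nature of $\chi_q$. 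The remaining integrals $\int_0^N e(-t\eta)\,dt$ and $\int_0^N e(-t\eta)\,d(t^{\beta_q})$ are then compared to their Riemann sums over integer lattice points: the former equals $N\widehat{M_N}(\eta)$ and the latter $N\beta_q\widehat{M_N^{\beta_q}}(\eta)$, both up to discretization error $O(1+N|\eta|)=O(Q)$. After dividing by $N$, this assembles into $\widehat{L^{a,q}_N}(\eta)$ as defined in \eqref{e:Laq}.

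For the error analysis, letting $E(t;q,r)$ be the remainder in the expansion for $\psi(t;q,r)$, integration by parts gives
\[
\Bigl| \int_0^N e(-t\eta)\,dE(t;q,r)\Bigr| \leq |E(N;q,r)| + 2\pi|\eta|\int_0^N |E(t;q,r)|\,dt \ll (1+N|\eta|)\max_{t\leq N}|E(t;q,r)|.
\]
Under GRH this is $O(QN^{1/2}\log^2 N)$ per residue class; summing over $\phi(q)\leq Q$ classes and dividing by $N$ gives $O(Q^2N^{-1/2}\log^2 N) = O(QN^{-1/2+\epsilon})$ for any fixed $\epsilon>0$, provided $Q$ is at most a small power of $N$ (which holds in every application). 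The parallel unconditional estimate yields $O(Qe^{-c\sqrt{\log N}})$ after adjusting the constant $c$ to absorb a further factor of $Q$.

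The main obstacle is identifying the continuous main term produced by Abel summation with the \emph{discrete} kernel $\widehat{M_N^{\beta_q}}(\eta) = \frac{1}{N\beta_q}\sum_{n\leq N}[n^{\beta_q}-(n-1)^{\beta_q}]e(-n\eta)$. Because $\beta_q$ may lie anywhere in $(1/2,1]$, the expansion $n^{\beta_q}-(n-1)^{\beta_q} = \beta_q n^{\beta_q-1}(1+O(1/n))$ must be shown uniformly in $\beta_q$ via the mean value theorem; combined with $|\eta|\leq Q/N$ to control oscillation of $e(-n\eta)$ across unit intervals, this ensures the Riemann-sum error is harmless. The rest is bookkeeping.
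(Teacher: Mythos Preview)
Your proof is correct and follows essentially the same approach as the paper: split $\widehat{A_N}(\xi)$ into residue classes mod $q$, apply Abel summation to reduce to the prime-counting asymptotic \eqref{e:primecount} (or Page's theorem \eqref{e:page}), and identify the resulting main term with $\widehat{L^{a,q}_N}(\xi-a/q)$. The only organizational difference is that the paper also applies Abel summation to $N\widehat{M_N}(\alpha)$ so that the continuous main terms match directly, whereas you pass from the continuous integral back to the discrete kernel via a Riemann-sum comparison; your observation that the error bound is really $Q^2 N^{-1/2+\epsilon}$ (absorbed into the stated bound since $Q$ is a small power of $N$) applies equally to the paper's own computation.
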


\begin{proof}
We proceed under GRH, and return to the unconditional case at the end of the argument.   
The key point is that we have the approximation \eqref{e:primecount} for $ \psi (N; q,r)$.  
Set $\alpha:=\xi-\frac{a}{q}$.  
Using Abel summation, we can write 
\begin{align*}
N \widehat {M_N}  (\alpha )  & = N e (\alpha N) - \sqrt{N} e (\alpha \sqrt N)    
- 2 \pi i \alpha \int _{\sqrt{N}    } ^N  e ^{t \alpha } \; dt + O (\sqrt N). 
\end{align*}
Turning to the primes, we separate out the sum below according to residue classes mod $ q$.  Since $ \xi = \frac{a}q + \alpha $, 
\begin{align}\label{defeq}
\sum_{\ell \leq N} e (\xi \ell ) \Lambda (\ell )  &= 
\sum_{\substack{0\leq r\leq q\\ \gcd(r,q)=1}}\sum_{\substack{\ell\leq N\\ \ell\equiv r \mod q}}e(\xi \ell)\Lambda(\ell)
\\
&=
\sum_{r\in A_q}e\bigl(\tfrac{ra}{q}\bigr)\sum_{\substack{\ell\leq N\\ \ell\equiv r\mod{q}}}e  (\alpha \ell  )\Lambda(\ell).
\end{align}
Examine the inner sum.  
Using Abel's summation formula, and the notation $ \psi $ for  prime counting function,  we have 
\begin{align*}
\sum_{\substack{\ell\leq N\\ \ell\equiv r\mod q }}e(\alpha \ell)\Lambda(\ell)&=\psi(N;q,r)e(\alpha N)-\psi(\sqrt{N};q,r)e(\alpha \sqrt{N})
\\ & \qquad -2\pi i\alpha\int_{\sqrt{N}}^N \psi(t;q,r)e(\alpha t)dt +O(\sqrt{N}) .
\end{align*}

At this point we can use the Generalized Riemann Hypothesis.  From \eqref{e:primecount}, it follows that 
\begin{align*}
\sum_{\substack{\ell\leq N\\ \ell\equiv r\mod q}} e(\alpha \ell)\Lambda(\ell)- \frac{N}{\phi(q)}
\widehat {M_N} (\alpha )  
& =  (\psi(N;q,r)-  \frac N  {\phi (q)} e (\alpha N)  ) e(\alpha N) 
\\ & \qquad 
- 2 \pi i     \alpha \int _{\sqrt{N}    } ^N   e (t \alpha )  (\psi(t;q,r) -t ) \; dt   + O ( \sqrt{N}    ) 
\\
&\ll N^{\frac{1}{2}+\epsilon}+\frac{Q}{N}\int_{\sqrt{N}}^N t^{\frac{1}{2}+\epsilon}dt + O (  N ^{\frac{1}2+ \epsilon })
\\ & \ll QN^{\frac{1}{2}+\epsilon}.
\end{align*}

The proof without GRH uses Page's Theorem \eqref{e:page} in place of \eqref{e:primecount}. We omit the details.  

\end{proof}

The previous Lemma approximates  $ \widehat {A_N} (\xi )$ near a rational point. 
We extend this approximation to the entire circle.  
This is done with these definitions.  
\begin{gather}
\\  \label{e::Vsn}
\widehat {V_{s,n} }(\xi )    = \sum_{\substack{a/q \in \mathcal R_s}}  G (\mathbf 1_{A_q}, a) \widehat {M_N} ( \xi -a/q)   \eta _{s} (\xi -a/q) , 
\\ \label{e::Wsn}
\widehat {W_{s,n}} (\xi )    = \sum_{a/q \in \mathcal R_s} G ( \chi _q , a) \widehat {M_N ^{\beta _q}}    (\xi -a/q ) \eta _{s} (\xi -a/q) , 
\\  
\mathcal R _{s}  = \{ a/q \;:\; a \in A _q, \   2 ^{s} \leq q < 2 ^{s+1}  \},  
\end{gather} 
and $ \mathcal R_0 = \{0\}$.  Further $ \mathbf 1_{ [-1/4,1/4]} \leq \eta \leq \mathbf 1_{[-1/2,1/2]} $, and $ \eta _s (\xi ) = \eta (4 ^{s} \xi )$.  
In \eqref{e:Wsn}, recall that if $ q$ is not exceptional, we have $ \chi _q =0$.  Otherwise, $ \chi _q$ is the associated exceptional Dirichlet character.  
Given integer $ N = 2 ^{n}$, set 
\begin{equation}\label{e:tildeN}
\tilde N  = 
\begin{cases}
e ^{c \sqrt{n}  /4   }  &  \textup{where $ c$ is as in \eqref{e:PNTlemma}}
\\
N ^{1/5}   & \textup{under GRH} 
\end{cases}
\end{equation}

\begin{lemma}\label{l:QQ} 
 Let  $ N= 2 ^{n}$.  Write $ A_N = B_N + \textup{Err}_N$, where 
\begin{equation}\label{e:BN}
B_N = \sum_{s \;:\; 2 ^{s} <   (\tilde N) ^{1/400} } V _{s,n} - W _{s,n} .
\end{equation}
Then, we have $ \lVert \textup{Err}_N f \rVert _{\ell ^2 } \ll  (\tilde N) ^{-1/1000} \lVert f\rVert _{\ell ^2 }$. 
\end{lemma}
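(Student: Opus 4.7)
The plan is to pass to the Fourier side. By Plancherel,
\begin{equation*}
\lVert \textup{Err}_N f \rVert _{\ell ^{2}} \leq \lVert \widehat{A_N} - \widehat{B_N} \rVert _{L ^{\infty }(\mathbb T)} \lVert f\rVert _{\ell ^{2}} ,
\end{equation*}
so it suffices to prove the uniform bound $\lVert \widehat{A_N} - \widehat{B_N} \rVert _{L ^{\infty }(\mathbb T)} \ll \tilde N ^{-1/1000}$. Set $ Q = \tilde N ^{1/400}$, so in particular $ Q ^{3} \leq N$ comfortably. For each $ \xi \in \mathbb T$, Dirichlet's approximation theorem with parameter $ N/Q$ produces a reduced fraction $ a/q$ with $ q \leq N/Q$ and $ \lvert \xi - a/q \rvert \leq Q/(qN)$. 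Call $ \xi $ a \emph{major arc} point if such an approximation exists with $ q \leq Q$, equivalently $ \lvert \xi - a/q\rvert \leq Q/N$, and a \emph{minor arc} point otherwise (so in that case the best Dirichlet denominator satisfies $ q > Q$).

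On a major arc around $ a/q$, let $ s$ be the unique index with $ 2 ^{s} \leq q < 2 ^{s+1}$, so $ 2^{s} < Q$. Lemma~\ref{PNTlemma} gives $ \widehat{A_N} (\xi ) = \widehat{L^{a,q}_N} (\xi - a/q) + E$ with $ \lvert E \rvert \ll Q N ^{-1/2+\epsilon}$ under GRH and $ \lvert E \rvert \ll Q e ^{-c \sqrt n}$ unconditionally; by the definition~\eqref{e:tildeN} of $ \tilde N$, both bounds are $ \ll \tilde N ^{-1/1000}$. Since $ Q ^{3} \leq N$ gives $ Q/N \leq 4 ^{-s}/4$, the cutoff satisfies $ \eta _s (\xi - a/q) = 1$, so the $ (a,q)$-summand in \eqref{e:BN} contributes $ \widehat{L^{a,q}_N} (\xi - a/q)$ \emph{exactly}. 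For any other $ a'/q' \in \mathcal R _{s'}$ with $ 2^{s'} < Q$ and $ a'/q' \neq a/q$, Farey spacing $ \lvert a/q - a'/q'\rvert \geq 1/(qq') \geq Q ^{-2}$ combined with $ \lvert \xi - a/q\rvert \leq Q/N \ll Q ^{-2}$ forces $ \lvert \xi - a'/q'\rvert \geq 1/(2qq') \geq Q/N$. The Gauss sum bounds $ \lvert G (\mathbf 1_{A_{q'}}, a')\rvert, \lvert G (\chi _{q'}, a')\rvert \ll (q') ^{-1/2}$ together with the Fourier decay $ \lvert \widehat{M_N} (\alpha )\rvert, \lvert \widehat{M_N ^{\beta }} (\alpha )\rvert \ll (N \lvert \alpha \rvert ) ^{-1}$ from Proposition~\ref{p:M}, together with the observation that at each scale $ s'$ only $ O(1)$ rationals $ a'/q'$ lie in the support of $ \eta _{s'} (\xi - \cdot )$, bound the total spurious contribution by
\begin{equation*}
\sum_{s' < \log _2 Q} 2 ^{-s'/2} \cdot Q ^{-1} \ll Q ^{-1} \ll \tilde N ^{-1/1000}.
\end{equation*}

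On a minor arc, Vinogradov's classical estimate for exponential sums over primes, applied with the Dirichlet approximation $ Q < q \leq N/Q$, yields $ \lvert \widehat{A_N} (\xi )\rvert \ll (\log N) ^{4} (Q ^{-1/2} + N ^{-1/5}) \ll \tilde N ^{-1/1000}$. For $ \widehat{B_N} (\xi )$, the defining property of the minor arcs gives $ \lvert \xi - a'/q'\rvert > Q/N$ for every $ (a',q')$ with $ q' \leq Q$, so Proposition~\ref{p:M} gives $ \lvert \widehat{M_N} (\xi - a'/q')\rvert, \lvert \widehat{M_N ^{\beta _{q'}}} (\xi - a'/q')\rvert \ll Q ^{-1}$. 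Summing as in the spurious-term estimate produces $ \lvert \widehat{B_N} (\xi )\rvert \ll Q ^{-1} \ll \tilde N ^{-1/1000}$.

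The main technical obstacle is the spurious-term analysis on major arcs: one must verify that the dyadic cutoffs $ \eta _{s'}$ at scales $ s' \neq s$ do not contaminate the main $ (a,q)$-term, even though the supports of $ \eta _{s'} (\xi - a'/q')$ for different $ (a',q')$ can in principle overlap with the major arc around $ a/q$. The resolution comes from the three-way interaction of Gauss sum decay, the $ (N\lvert \alpha \rvert ) ^{-1}$ decay of $ \widehat{M_N}$, and Farey spacing, which together produce a gain of order $ Q ^{-1}$, comfortably beating the target $ \tilde N ^{-1/1000}$. The minor arc estimate, by contrast, is essentially Vinogradov's bound applied at the appropriate scale.
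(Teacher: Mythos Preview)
Your argument is correct and follows the same overall architecture as the paper's proof: reduce to an $L^\infty(\mathbb T)$ bound on $\widehat{A_N}-\widehat{B_N}$ via Plancherel, split into major and minor arcs via Dirichlet, invoke Lemma~\ref{PNTlemma} on the major arcs, and control the spurious $B_N$-contributions by combining Farey separation with the decay of $\widehat{M_N}$ and $\widehat{M_N^\beta}$ from Proposition~\ref{p:M}.

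The one substantive difference is your choice of Dirichlet parameter. You run Dirichlet with modulus $N/Q$, giving the classical circle-method split $Q<q\le N/Q$ on the minor arcs; this is exactly the range in which the \emph{classical} Vinogradov bound $(q^{-1/2}+N^{-1/5}+(q/N)^{1/2})(\log N)^4$ is already $\ll Q^{-1/2}(\log N)^4\ll\tilde N^{-1/1000}$. The paper instead runs Dirichlet only up to $(\tilde N)^{1/5}$, so its minor-arc denominators satisfy $(\tilde N)^{1/400}<q\le(\tilde N)^{1/5}$, and it appeals to the sharper estimate of Iwaniec--Kowalski (Thm.~13.6) rather than Vinogradov. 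Your parametrization is the more elementary route and avoids that citation. A second, smaller difference: the paper splits the major arc into two subcases depending on whether $|\xi-a/q|<\tfrac14 N_0^{-2}$, whereas you handle it uniformly via the Farey lower bound $|\xi-a'/q'|\ge 1/(2qq')$; both work, and your version is a bit cleaner.
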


\begin{proof}
We estimate the $ \ell ^2 $ norm by Plancherel's Theorem. That is, we bound 
\begin{equation*}
\lVert     \widehat  {A_N} -  \widehat  { B_N}  \rVert _{  L ^{\infty } (\mathbb T )}  \ll   (\tilde N) ^{-1/1000} . 
\end{equation*}

Fix $ \xi \in \mathbb T $, where we will estimate the $ L ^{\infty } $ norm above. 
By Dirichlet's Theorem, there are relatively prime integers $ a, q$ with $ 0\leq a <q \leq  (\tilde N) ^{1/5}$ with 
\begin{equation*}
 \lvert  \xi -a/q\rvert <  \frac1{q ^2 }.  
\end{equation*}
The argument now splits into cases, depending upon the size of $ q$. 

Assume that $ (\tilde N)   ^{1/400} < q \leq   (\tilde N) ^{1/5}$.  
This is a situation for which  the classical  Vinogradov inequality \cite{MR0062138}*{Chapter 9} was designed. 
That estimate is however is not enough for our purposes.  Instead we use \cite{MR2061214}*{Thm 13.6} for the estimate below. 
\begin{align*}
\lvert  \widehat  {A_N}  (\xi ) \rvert  & 
\ll  (q ^{-1/2} + (q/N) ^{1/2} + N ^{-1/5}) \log ^{3}N  \ll (\tilde N)   ^{-1/1000}. 
\end{align*}
So, in this case we should also see that $ \widehat {B_N } (\xi ) $ satisfies the same bound.  
The function $ \widehat {B_N } $ is a sum over $ \widehat {V _{s,n}} $ and $ \widehat {W _{s,n}}  $. 
The argument for both is the same.  Suppose that $\widehat {V _{s,n}} (\xi ) \neq 0 $. The supporting intervals 
for $ \eta _{s} (\xi - a/q)$ for $ a/q\in \mathcal R_s$ are pairwise disjoint.  
We must have $ \lvert  \xi - a_0/q_0\rvert < 2 ^{-2s} $ for some $ a_0/q_0 \in \mathcal R_s$, where $ 2 ^{s} < (\tilde N) ^{1/400} $. 
Then, 
\begin{equation*}
\lvert  \xi - a_0/q_0\rvert \geq  \lvert  a_0/q_0 - a/q\rvert - \lvert  \xi -a/q\rvert 
\geq (q q_0) ^{-1}  - q ^{-2} \geq q_0 ^{-4}.  
\end{equation*}
But then by the decay estimate \eqref{e:1Hat}, we have 
\begin{align*}
\lvert  G (\mathbf 1_{A_q}, a_0) \widehat {M_N} (\xi - a_0/q_0) \rvert \ll  (N q_0 ^{-4}) ^{-1} \ll  N ^{-1}   (\tilde N) ^{1/100}
\end{align*}
This estimate is summed over $ s \leq (\tilde N) ^{1/400} $ to conclude this case.  

\smallskip 

Proceed under the assumption that $ q  \leq N_0 = (\tilde N)   ^{1/400}$.  
From  Lemma~\ref{PNTlemma},  the inequality \eqref{e:PNTlemma} holds. 
\begin{align*}
\widehat{A_N}(\xi) & =\widehat {L ^{a,q}_N  } (\xi - \tfrac{a}{q})+  O (  N_0 ^{-1/2})
\end{align*}
The Big $ O$ term is as is claimed, so we verify that $ \widehat{ B_N}( \xi ) -  \widehat {L ^{a,q}_N  } (\xi - \tfrac{a}{q}) \ll  N _0 ^{-1/2}$.  

The  analysis depends upon how close $ \xi $ is to $ a/q$.  Suppose 
that $ \lvert  \xi -a/q \rvert < \tfrac{1}4  N_0 ^{-2} $.  Then $ a/q$ is the unique rational $ b/r$ with $ (b,r)=1$  and $ 0\leq b < r \leq   N_0 $ that meets this criteria. 
That means that 
\begin{align*}
   \widehat  {B_N } (\xi ) 
   &= \widehat {L ^{a,q}_N  } (\xi -  a/q)  \eta _{s} (\xi -a/q)  
\end{align*}
where in the last term on the right,  $ 2 ^{s} \leq q < 2 ^{s+1}$.  
By definition $ \eta _{s}(\xi -a/q) = \eta (4 ^{s} (\xi -a/q)  ) $, which equals one  by assumption on $ \xi $. 
  That completes this case.  

Continuing, suppose that there is no $ a/q$ with $ \lvert  \xi -a/q\rvert <   N_0 ^{-2} $. The point is that we have 
the decay estimates \eqref{e:1Hat} and \eqref{e:betaHat1} which imply 
\begin{equation*}
\lvert  \widehat {M_N} (\xi - a/q) \rvert +  \lvert  \widehat {M_N ^{\beta }} (\xi -a/q ) \rvert  \ll  [ N (\xi -a/q)] ^{-1} \ll   \frac{ N_0 ^2 } { N} 
\ll N ^{-3/5}. 
\end{equation*}
But then,  from the definition \eqref{e:Laq},  we have 
\begin{equation*}
 \lvert  \widehat {L ^{a,q}_N  } (\xi - \tfrac{a}{q})\rvert \ll   N ^{-1/5} . 
\end{equation*}
And as well, trivially bounding Gauss sums by $ 1$, we have 
\begin{equation*}
\lvert     \widehat {B_N} (\xi )  \rvert  \ll  \frac{   n ^{3/5}}N \ll   N ^{-1/5}, 
\end{equation*}
by just summing over all $ a/q \in \mathcal R _s$, with $  s <  (\tilde N) ^{1/400}  $.  
That completes the proof.  

\end{proof}

The discussion to this point is of a standard nature.  We state here a decomposition of the operator $ B_N$ 
defined in \eqref{e:BN}.  
It encodes our  High/Low/Exceptional decomposition, and requires some care to phrase, in order to prove our endpoint type 
results for the prime averages.  
It depends upon a supplementary parameter $ Q$. 
This parameter $ Q$ will play two roles, controlling the size and smoothness of denominators.  
Recall that an integer $ q$ is  \emph{$ Q$-smooth} if all of its prime factors are less than $ Q$.  
Let $ \mathbb S _Q$ be the collection of  square-free $ Q$-smooth integers.   
\begin{gather}
\\  \label{e:VsnLo}
\widehat {V_{s,n} ^{Q , \textup{lo}} } (\xi )    = \sum_{\substack{a/q \in \mathcal R_s\\ q\in \mathbb S _Q }} G (\mathbf 1_{A_q}, a) \widehat {M_N} ( \xi -a/q)   \eta _{s} (\xi -a/q) , 
\\ \label{e:VsnHi}
\widehat {V_{s,n} ^{Q , \textup{hi}} } (\xi )    = \sum_{\substack{a/q \in \mathcal R_s\\ q \not\in \mathbb S _Q }} G (\mathbf 1_{A_q}, a) \widehat {M_N} ( \xi -a/q)   \eta _{s} (\xi -a/q) , 
\\
\\ \label{e:Wsn}
\widehat {W_{s,n}} (\xi )    = \sum_{a/q \in \mathcal R_s} G ( \chi _q , a) \widehat {M_N ^{\beta _q}}    (\xi -a/q ) \eta _{s} (\xi -a/q) , 
\end{gather}

 Define 
\begin{gather}\label{e:Lo}
   {\operatorname{Lo} _{Q,N}}  = \sum_{s  }  V _{s,n} ^{ Q, \textup{lo} }  ,  
   \\  \label{e:Hi} 
 {\operatorname{Hi} _{Q,N} }  =   \sum_{s \;:\;  Q\leq  2 ^{s} \leq    (\tilde N) ^{1/400} }  V _{s,n} ^{ Q, \textup{hi} } - W _{s,n}
 \\ \label{e:Ex}
  \operatorname{Ex} _{Q, N}   =   \sum_{s \;:\;   2 ^{s} \leq   Q }  W _{s,n} 
\end{gather}  
Concerning these definitions, in the Low term \eqref{e:Lo},  there is no restriction on $ s$, but the sum only depends upon the 
finite number of square-free $ Q$-smooth numbers in $ \mathbb S _Q$.  
(Due to \eqref{e:mob}, the non-square free integers will not contribute to the sum.) 
The largest integer in $ \mathbb S _Q$ will be about $ e ^{Q}$, and the value of $ Q$ can be as big as $ \tilde N$. 
In the High term \eqref{e:Hi},  there are two parts associated with the principal and exceptional characters. 
For the principal characters, we exclude the square free $ Q$-smooth denominators which are both larger than $ Q$ 
and less than $  (\tilde N) ^{1/400}  $.  These are included in the Low term. We include all the denominators for the 
exceptional characters.  
 In the Exceptional term \eqref{e:Ex}, we just impose the restriction on the size of the denominator to be not more than $ Q$. 
 This will be part of the Low term.  

The sum of these three terms well approximates  $ B_N$. 

\begin{proposition}\label{p:Err'}  
Let $ 1\leq Q \leq \tilde N$.  
We have the estimate $ \lVert \textup{Err}'_N f\rVert _{\ell ^2 } \lesssim (\tilde N) ^{-1/2} \lVert f\rVert _{\ell ^2 }$, 
where 
\begin{equation}\label{e:Err'}
\textup{Err}'_N = {\operatorname{Lo} _{Q,N}} 
+{\operatorname{Hi} _{Q,N} }
+ \operatorname{Ex} _{N} + \textup{Err}_N  - B_N . 
\end{equation}

\end{proposition}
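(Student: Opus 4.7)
The plan is to use Plancherel to pass to the Fourier side, combine the already-established bound on $\textup{Err}_N$ from Lemma~\ref{l:QQ}, and show directly that the additional operator $\operatorname{Lo}_{Q,N} + \operatorname{Hi}_{Q,N} + \operatorname{Ex}_{Q,N} - B_N$ has Fourier symbol that is uniformly small on $\mathbb T$.

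First I would unwind the definitions.  Writing $V_{s,n} = V_{s,n}^{Q,\textup{lo}} + V_{s,n}^{Q,\textup{hi}}$ and lining up the three $s$-ranges of $\operatorname{Lo}_{Q,N}$, $\operatorname{Hi}_{Q,N}$, $\operatorname{Ex}_{Q,N}$ against the single range $2^s < (\tilde N)^{1/400}$ of $B_N$, essentially every multiplier cancels in pairs.  The residues are: (i) a tail $\sum_{2^s \ge (\tilde N)^{1/400}} V_{s,n}^{Q,\textup{lo}}$ from $\operatorname{Lo}$ (which is unrestricted in $s$); (ii) a low-scale piece $\sum_{2^s < Q} V_{s,n}^{Q,\textup{hi}}$ carrying the non-smooth denominators excluded from $\operatorname{Hi}$; and (iii) a short $W_{s,n}$-residue in the scales where $\operatorname{Hi}$ and $\operatorname{Ex}$ do not align with $-B_N$.

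To control (i) and (ii) the decisive input is the Ramanujan-sum identity $G(\mathbf 1_{A_q},a) = \mu(q)/\phi(q)$ for $(a,q)=1$, which annihilates non-square-free $q$.  In (ii) this forces any surviving $q \notin \mathbb S_Q$ with $q < 2^{s+1} \le 2Q$ to be a prime in $[Q,2Q)$, collapsing the range to a single dyadic scale $2^s \asymp Q$ at which $|G| \lesssim 1/Q$.  In (i), at each surviving scale $|G| \lesssim 2^{-s}\operatorname{Log}\operatorname{Log} 2^s$ by \eqref{e:totient}.  Combined with $|\widehat{M_N}| \le 1$ from \eqref{e:1Hat} and the pairwise disjointness of the bumps $\eta_s(\,\cdot\, - a/q)$ in each $\mathcal R_s$, the $L^\infty$ contribution is a geometric tail starting at $2^s \sim (\tilde N)^{1/400}$ plus an $O(Q^{-1})$ term, each an acceptable negative power of $\tilde N$.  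For (iii) I would exploit the Page-type spacing $q_{n+1} > q_n^c$ of exceptional moduli, together with $|G(\chi_q,a)| \le q^{-1/2}$ and \eqref{e:betaHat1}, to dominate the residue by $\sum q^{-1/2}$ over the very few exceptional $q$ in the uncancelled window, which is again a negative power of $\tilde N$ by the spacing.  Combining the three contributions with Lemma~\ref{l:QQ} produces the stated bound.

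The main obstacle is the book-keeping: verifying which scales $s$ and which denominators $q$ survive the three-term cancellation, and in particular seeing that the Ramanujan-sum vanishing is precisely what collapses the $V^{Q,\textup{hi}}$ residue from a potential loss of a power of $Q$ down to a single scale.  Once that collapse is in place, disjointness of the Fourier supports reduces the $L^\infty$ estimate at each scale to a single-term bound, and the remainder of the argument is summing geometric series in $s$.
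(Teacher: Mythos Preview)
Your plan is structurally the same as the paper's, only more granular.  The paper's argument is essentially one line: it asserts that the cancellation is \emph{exact}, so that
\[
\widehat{\textup{Err}'_N}(\xi)=\sum_{s\,:\,2^{s}>(\tilde N)^{1/400}}\widehat{V_{s,n}^{Q,\textup{lo}}}(\xi),
\]
and then bounds this single tail in $L^\infty(\mathbb T)$ using $|G(\mathbf 1_{A_q},a)|=|\mu(q)|/\phi(q)\ll(\Log\Log q)/q$ together with disjointness of the bumps $\eta_s(\cdot-a/q)$.  In the paper's bookkeeping there are no residues (ii) or (iii) at all: for (ii), once M\"obius kills the non-square-free $q$, every surviving $q$ at a scale with $2^{s+1}\le Q$ is automatically $Q$-smooth, hence lies in $\mathbb S_Q$, so $V_{s,n}^{Q,\textup{hi}}\equiv 0$ there; for (iii), the $W$-pieces of $\operatorname{Hi}_{Q,N}$ and $\operatorname{Ex}_{Q,N}$ exactly reconstitute the $W$-part of $B_N$ (note the sign: in the proof of Theorem~\ref{t:first} the decomposition reads $A_N=\operatorname{Lo}+\operatorname{Hi}-\operatorname{Ex}+\textup{Err}+\textup{Err}'$).

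There is a genuine gap in your write-up.  You bound the surviving boundary scale in (ii) by $O(Q^{-1})$ and call this ``an acceptable negative power of $\tilde N$''; but the hypothesis is $1\le Q\le\tilde N$, so $Q^{-1}$ can be of order $1$ and is certainly not $\lesssim(\tilde N)^{-1/2}$.  The same objection applies to your treatment of (iii): a bound of $q^{-1/2}$ summed over exceptional $q\le Q$ is again only $O(1)$ when $Q$ is small.  The fix is not to estimate these residues but to see, as the paper does, that with the dyadic conventions aligned they are \emph{zero}---the single boundary scale you isolate belongs to $\operatorname{Hi}_{Q,N}$ (resp.\ $\operatorname{Ex}_{Q,N}$), not to the remainder.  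Once that is arranged, only your tail (i) survives, and your bound on that piece matches the paper's.
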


\begin{proof}
From \eqref{e:BN}, we see that 
\begin{equation*}
\widehat {\textup{Err}'_N } (\xi )  =  
\sum_{s \;:\; 2 ^{s} >  (\tilde N) ^{1/400}}  \widehat {V_{s,n} ^{Q , \textup{lo}} } (\xi )  
\end{equation*}
Recalling the definition of $ V_{s,n} ^{Q , \textup{lo}}$ from \eqref{e:VsnLo},  
it is straight forward to estimate this last sum in $ L ^{\infty } (\mathbb T )$, using the Gauss sum estimate 
$ G (\mathbf 1_{A_q}, a ) \ll \frac{\Log\Log q}q$.  
\end{proof}

\section{Properties of the High,  Low and Exceptional  Terms} \label{s:High}

The further properties of the High,    Low    and Exceptional terms are given here, in that order.

\subsection{The High Terms}
We have  the $ \ell ^2 $ estimates for the fixed scale, and 
and for the supremum over large scales, for the High term defined in \eqref{e:Hi}.  
Note that the supremum is larger by a logarithmic factor.  

\begin{lemma}\label{l:Hi}  We have the inequalities 
\begin{align}\label{e:HiFixed}
\lVert  \operatorname{Hi} _{Q,N} \rVert _{\ell ^2 \to \ell ^2 }  & \lesssim \frac{\log\log Q} Q , 
\\ \label{e:HiSup}
\lVert \sup _{N > Q ^2 }  \lvert   \operatorname{Hi} _{Q,N} f\rvert \rVert_2 & \lesssim \frac{\log\log Q \cdot \log Q} Q  \lVert f\rVert _{\ell ^2 }.  
\end{align}
\end{lemma}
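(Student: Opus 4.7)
My plan is to derive both estimates from an $L^\infty(\mathbb T)$ bound on the Fourier multiplier $\widehat{\operatorname{Hi}_{Q,N}}$ via Plancherel. Three structural facts will drive the argument: (i) for each fixed $s$, the cutoffs $\eta_s(\xi - a/q)$, $a/q \in \mathcal R_s$, have pairwise disjoint supports (Farey spacings are $\gtrsim 4^{-s}$, while $\eta_s$ is supported on an interval of length $\leq 4^{-s}$), so at most one summand contributes at any $\xi$; (ii) the identities $G(\mathbf 1_{A_q}, a) = \mu(q)/\phi(q)$, vanishing for non-squarefree $q$, together with the Gauss-sum bound $|G(\chi_q, a)| \lesssim q^{-1/2}$; and (iii) the extreme rarity $q_{n+1} > q_n^c$ of exceptional moduli.

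For \eqref{e:HiFixed}, I would first handle the principal part $V^{Q,\textup{hi}}_{s,n}$. Only squarefree $q$ survive, and the condition $q \notin \mathbb S_Q$ forces a prime factor $\geq Q$, hence $q \geq \max(Q, 2^s)$. Combined with $\phi(q) \gtrsim q/\log\log q$ from \eqref{e:totient} and $\lvert \widehat{M_N} \rvert \leq 1$, each scale $s$ contributes $\lesssim \log\log 2^s / 2^s$ to the multiplier norm, and the geometric series over $2^s \geq Q$ gives $\log\log Q / Q$. For the exceptional part $W_{s,n}$, the sparsity forces the exceptional scales $s_1 < s_2 < \dots$ to be geometrically separated, with $O(1)$ exceptional moduli per scale. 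I would absorb the resulting contribution using the $\ell^1$ decay $\lVert \widehat{M_N^{\beta_q}} \rVert_\infty \leq N^{\beta_q - 1}/\beta_q$ together with an effective lower bound on $1 - \beta_q$, exploiting the restriction $N > Q^2$.

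For the maximal estimate \eqref{e:HiSup}, the extra $\log Q$ factor is characteristic of a Rademacher--Menshov passage. Restricting to dyadic $N = 2^j$ with $j_0 := \lceil 2 \log_2 Q \rceil$, I would telescope
\begin{equation*}
\sup_{j \geq j_0} \lvert \operatorname{Hi}_{Q,2^j} f \rvert \leq \lvert \operatorname{Hi}_{Q,2^{j_0}} f \rvert + \sum_{j \geq j_0} \lvert (\operatorname{Hi}_{Q,2^{j+1}} - \operatorname{Hi}_{Q,2^j}) f \rvert.
\end{equation*}
Each difference operator has Fourier multiplier built from $\widehat{M_{2^{j+1}}} - \widehat{M_{2^j}}$, and the analysis above applied to this difference yields a per-scale $\ell^2$ operator norm of order $\log\log Q / Q$. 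For $j$ beyond a threshold of size $O(\log Q)$ above $j_0$, the multiplier differences decay fast enough to be summable without further logarithmic cost, so the overall budget is $\log Q$ times the fixed-scale bound.

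The main obstacle will be the exceptional term $W_{s,n}$. The naive bound $|G(\chi_q, a)| \lesssim q^{-1/2}$ alone gives only $Q^{-1/2}$ after summing over exceptional scales, short of the $Q^{-1}$ target. The missing saving must come from combining the super-exponential rarity of exceptional $q$ (which localizes the effective sum to essentially a single scale) with the decay factor $N^{\beta_q - 1}$, valid because of the restriction $N > Q^2$ and because $\beta_q$ is effectively bounded away from $1$. Balancing these competing features, and confirming that the same bookkeeping carries over to the telescoped differences for the maximal inequality, is the crux of the argument.
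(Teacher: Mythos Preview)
Your treatment of the principal-character part of \eqref{e:HiFixed} matches the paper's: Plancherel, disjoint supports of the $\eta_s(\cdot - a/q)$ within each scale $s$, the bound $\lvert G(\mathbf 1_{A_q},a)\rvert \leq 1/\phi(q) \lesssim (\log\log q)/q$, and the geometric sum over $2^s \geq Q$. For the exceptional characters, though, you appeal to an ``effective lower bound on $1-\beta_q$'' in order to exploit the factor $N^{\beta_q-1}$. No such effective bound is available---this is precisely the Siegel-zero obstruction---and the paper does not use $N^{\beta_q-1}$ for this lemma at all. It simply treats the $W_{s,n}$ ``in exactly the same manner'' as the $V_{s,n}$: bound $\lvert G(\chi_q,a)\rvert \lesssim q^{-1/2}$, bound $\lVert\widehat{M_N^{\beta_q}}\rVert_\infty \lesssim 1$, and sum over the super-geometrically sparse exceptional scales.

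The more serious gap is in your route to \eqref{e:HiSup}. Telescoping in $N$ and asserting that the multiplier differences become summable ``beyond a threshold of size $O(\log Q)$'' is not correct. For $\xi$ near some $a/q\in\mathcal R_s$, the difference $\widehat{M_{2^{j+1}}}(\xi-a/q)-\widehat{M_{2^j}}(\xi-a/q)$ is of order $1$ exactly when $2^j\sim \lvert\xi-a/q\rvert^{-1}$; this transition scale depends on $\xi$, can lie anywhere up to $4^s$, and $s$ itself ranges up to $\log_2(\tilde N^{1/400})$---none of which is controlled by $Q$. Consequently $\sum_j\lVert\widehat{\operatorname{Hi}_{Q,2^{j+1}}}-\widehat{\operatorname{Hi}_{Q,2^j}}\rVert_\infty$ is not bounded by a fixed multiple of $(\log Q)(\log\log Q)/Q$. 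The paper takes a different path: it first sums over $s$ by the triangle inequality, and for each fixed $s$ applies Bourgain's multi-frequency maximal inequality (Lemma~\ref{l:BMF}) to $\sup_N\lvert V^{Q,\textup{hi}}_{s,n} f\rvert$ with $J\lesssim 4^s$ frequencies, picking up a factor $\log J\sim s$. Summing $\sum_{2^s\geq Q} s\,(\log s)\,2^{-s}$ then gives $(\log Q)(\log\log Q)/Q$. The logarithmic loss is in the number of \emph{frequencies} at level $s$, not in the number of averaging scales $N$, and Bourgain's lemma---not a Rademacher--Menshov telescoping in $N$---is what delivers it.
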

We comment that the insertion of the $ Q$ smooth property into the definition of $  V _{s,n}  ^{ Q, \textup{hi} }$ in \eqref{e:VsnHi} 
is immaterial to this argument.  

\begin{proof} 
Below, we assume that there are no exceptional characters, as a matter of convenience as the exceptional characters are treated in 
exactly the same manner.  
For the inequality \eqref{e:HiFixed}, we have from the definition of the High term in \eqref{e:Hi}, and \eqref{e:VsnHi},
\begin{align*}
\lVert  \operatorname{Hi} _{Q,N} \rVert _{\ell ^2 \to \ell ^2 }  & = 
\lVert \widehat  { \operatorname{Hi} _{Q,N} } \rVert _{L ^{\infty } (\mathbb T )}
\\
&= 
\Bigl\lVert 
  \sum_{s \;:\;  Q \leq 2 ^{s} \leq  \tilde N} \widehat {  V _{s,n}  ^{ Q, \textup{hi} }} 
\Bigr\rVert _{L ^{\infty } (\mathbb T )}
\\
&\leq   
  \sum_{s \;:\;  Q \leq 2 ^{s} \leq  \tilde N} 
\lVert 
\widehat {  V _{s,n} ^{ Q, \textup{hi} }} 
\rVert _{L ^{\infty } (\mathbb T )} 
\\
& \leq 
 \sum_{s \;:\;  Q \leq 2 ^{s} \leq  \tilde N} 
\max _{2 ^{s} \leq q < 2 ^{s+1}}  \max _{a\in A _q } \lvert  G (\mathbf 1_{A _q}, a)\rvert  
\\
& \ll  \sum_{s \;:\;  Q \leq 2 ^{s} \leq  \tilde N} \max _{2 ^{s} \leq q < 2 ^{s+1}}  \frac1{\phi (q)} 
\\
& \ll  \sum _{s \;:\; Q\leq 2 ^{s}}  \log s \cdot  2 ^{-s} \ll \frac{\log\log Q} Q.  
\end{align*}
The first line is Plancherel, and the subsequent lines depend upon definitions, and the fact that the 
functions  below are disjointly supported.  
\begin{equation*}
 \{ \eta _{s} ( \cdot -a/q) \;:\;  2 ^{s} \leq q < 2 ^{s+1} ,\  a \in A _q  \}. 
\end{equation*}
Last of all, we use a well known lower bound  $ \phi (q) \gg q/\log\log q$.  

\medskip 
For the maximal inequality \eqref{e:HiSup}, we have an additional logarithmic term. This is direct consequence of the Bourgain 
multi-frequency inequality, stated in Lemma~\ref{l:BMF}.  
We   then have 
\begin{align*}
\lVert \sup _{N > Q ^2 }  \lvert   \operatorname{Hi} _{Q,N} f\rvert \rVert_{ \ell ^2}  
& \leq 
  \sum_{s \;:\;  Q \leq 2 ^{s}} 
\bigl\lVert 
 \sup _{N > Q ^2 }  \lvert   {  V _{s,n} ^{ Q, \textup{hi} }} f \rvert 
 \bigl\rVert _{\ell ^{2}}
 \\
 & \ll    \sum_{s \;:\;  Q \leq 2 ^{s}  }  
 s  \cdot  \max _{2 ^{s} \leq q < 2 ^{s+1}}  \frac1{\phi (q)}  \cdot \lVert f\rVert _{\ell ^{2}} \lesssim \frac{\log Q \cdot \log\log Q} Q  \lVert f\rVert _{\ell ^{2}}.  
\end{align*}

\end{proof}

\begin{lemma}\label{l:BMF}  Let $ \theta _1 ,\dotsc, \theta _J$ be points in $ \mathbb T $ with 
$ \min _{j\neq k} \lvert  \theta _j - \theta _k\rvert >  2 ^{-2 s_0+2} $.    We have the inequality 
\begin{equation*}
\Bigl\lVert 
\sup _{N > 4 ^{s_0}} 
\Bigl\lvert 
\sum_{j=1} ^{J}  \mathcal F ^{-1}  \Bigl(  \widehat f  \sum_{j=1} ^{J} \tilde M_N ( \cdot - \theta _j) \eta _{s_0} ( \cdot - a/q)  \Bigr)
\Bigr\rvert
\Bigr\rVert _{\ell ^{2}} \ll  \log J \cdot  \lVert f\rVert _{\ell ^{2}}. 
\end{equation*}

\end{lemma}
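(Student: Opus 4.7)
The plan is to follow Bourgain's multi-frequency maximal argument: reduce to a dyadic supremum, apply the Rademacher--Menshov inequality, and bound the resulting square function via Plancherel, exploiting the orthogonality coming from the separation of the $\theta_j$.

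Interpret the inner sum in the statement as having the factor $\eta_{s_0}(\cdot - \theta_j)$, and write $T_N f := \mathcal{F}^{-1}(\widehat f\, m_N)$ with $m_N(\xi) = \sum_j \tilde M_N(\xi - \theta_j) \eta_{s_0}(\xi - \theta_j)$. Since $\eta_{s_0}$ is supported in $[-2^{-2s_0-1}, 2^{-2s_0-1}]$ and the $\theta_j$ are separated by more than $2^{-2s_0+2}$, the translates $\theta_j + \operatorname{supp}(\eta_{s_0})$ are pairwise disjoint. Plancherel then gives the single-scale bound $\|T_N f\|_{\ell^2} \lesssim \|f\|_{\ell^2}$, and smoothness of $\tilde M_N$ in $N$ reduces the supremum to dyadic $N = 2^n$ with $n > 2s_0$.

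Set $D_k f := (T_{2^k} - T_{2^{k-1}}) f$ so that $T_{2^n} f - T_{4^{s_0}} f = \sum_{2s_0 < k \leq n} D_k f$. By Plancherel with disjoint supports,
\begin{equation*}
\sum_k \|D_k f\|_{\ell^2}^2 = \sum_j \int |\widehat f(\xi)|^2 \eta_{s_0}(\xi - \theta_j)^2 \Bigl( \sum_k |\tilde M_{2^k}(\xi - \theta_j) - \tilde M_{2^{k-1}}(\xi - \theta_j)|^2 \Bigr) d\xi.
\end{equation*}
The inner sum over $k$ is $O(1)$ uniformly in $\xi$: the decay bound \eqref{e:1Hat} together with the smoothness $|\tilde M_N(\eta) - 1| \lesssim N|\eta|$ gives $|\tilde M_{2^k}(\eta) - \tilde M_{2^{k-1}}(\eta)|^2 \lesssim \min(4^k|\eta|^2, (2^k|\eta|)^{-2})$, which sums geometrically around the critical scale $2^k \sim 1/|\eta|$. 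Hence $\sum_k \|D_k f\|_{\ell^2}^2 \lesssim \|f\|_{\ell^2}^2$.

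Applying the Rademacher--Menshov inequality to the $D_k f$ over $M$ dyadic scales then yields $\|\sup_n |T_{2^n} f - T_{4^{s_0}} f|\|_{\ell^2} \lesssim \log M \cdot \|f\|_{\ell^2}$. The main obstacle is truncating to $M \lesssim J$ scales so that $\log M$ becomes $\log J$. For $n \geq 2s_0 + C \log J$ the multiplier $\tilde M_{2^n}(\xi - \theta_j)\eta_{s_0}(\xi - \theta_j)$ is concentrated in a ball of radius $\lesssim 2^{-n} \ll 4^{-s_0}/J$ around each $\theta_j$, and the supremum over these tail scales must be controlled separately by an $O(\|f\|_{\ell^2})$ estimate using disjoint supports and the decay $\tilde M_N(\eta) \lesssim (N|\eta|)^{-1}$. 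Combining the Rademacher--Menshov bound on the central window of $O(\log J)$ scales with this tail estimate produces the claimed $\log J$ factor.
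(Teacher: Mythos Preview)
The paper does not prove this lemma: immediately after the statement it cites Bourgain \cite{MR1019960} for the result with a higher power of $\log J$, and \cite{MR4072599} for the single-logarithm refinement. So there is no in-paper argument to compare against.

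Your outline follows the standard route and is correct through the square-function step: disjointness of the supports of $\eta_{s_0}(\cdot-\theta_j)$ gives the single-scale bound, the estimate $|\tilde M_{2^k}(\eta)-\tilde M_{2^{k-1}}(\eta)|\lesssim\min(2^k|\eta|,(2^k|\eta|)^{-1})$ gives $\sum_k\|D_kf\|_{\ell^2}^2\lesssim\|f\|_{\ell^2}^2$, and Rademacher--Menshov over $M$ dyadic scales then yields $(\log M)\|f\|_{\ell^2}$.

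The genuine gap is the tail. You assert that for $n\ge 2s_0+C\log J$ the supremum is $O(\|f\|_{\ell^2})$ ``using disjoint supports and the decay $\tilde M_N(\eta)\lesssim(N|\eta|)^{-1}$'', but this is precisely the hard part and the justification does not work. The decay of $\tilde M_N$ helps only where $|\xi-\theta_j|$ is bounded below; on the portion of $\operatorname{supp}\eta_{s_0}(\cdot-\theta_j)$ with $\xi$ very close to $\theta_j$ the multiplier stays near $1$ for \emph{every} $N$, so the tail is again a multi-frequency maximal function over infinitely many scales. The crude bound available there (Cauchy--Schwarz in $j$ followed by the single-frequency maximal theorem) gives only $\sqrt{J}\,\|f\|_{\ell^2}$, which is far larger than the $\log J$ budget. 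There is also an internal inconsistency in the sketch: you first write ``$M\lesssim J$ scales'' but then put the cutoff at $n=2s_0+C\log J$, a window of only $O(\log J)$ scales; on that window Rademacher--Menshov would yield $O(\log\log J)$, so if the tail claim held your scheme would beat $\log J$---a further sign that the tail cannot be $O(\|f\|_{\ell^2})$.

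This difficulty is exactly why Bourgain's original argument carries an extra logarithm. Removing it requires an additional idea beyond what you have sketched---for instance a Littlewood--Paley decomposition in the frequency variable near each $\theta_j$ combined with a more careful scale-by-scale analysis, or passage through a variational inequality---and that is the content of the references the paper cites.
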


This is one of the main results of \cite{MR1019960}. It is stated therein with a higher power of $ \log J $. 
But it is well known that the inequality holds with a single power of $ \log J$. This is discussed in detail in \cite{MR4072599}.

\subsection{The Low Terms}

From the Low terms defined in \eqref{e:Lo},  the property is

\begin{lemma}\label{l:Lo} 
For a functions $ f ,g$ supported on  interval $ I$ of length $ N= 2 ^{n}$, 
we have 
\begin{equation}\label{e:LoLess} 
 N ^{-1} \langle  \operatorname {Lo} _{Q, N} \ast f , g \rangle  \ll    \log Q    \cdot   \langle f \rangle _{I} \langle g \rangle_I. 
\end{equation}
\end{lemma}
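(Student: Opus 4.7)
The plan is to pass to the physical side and exploit Ramanujan-sum cancellation across all $Q$-smooth square-free denominators. Using the identity $G(\mathbf{1}_{A_q}, a) = c_q(a)/\phi(q) = \mu(q)/\phi(q)$ for $a \in A_q$ and squarefree $q$ (from $c_q(a) = \mu(q)$ when $\gcd(a,q) = 1$), together with $\sum_{a \in A_q} e(ax/q) = c_q(x)$, inverting the Fourier-side definition \eqref{e:VsnLo} and summing over $s$ produces the physical-space identity
\begin{equation*}
\operatorname{Lo}_{Q,N}(x) = \sum_{q \in \mathbb{S}_Q}\frac{\mu(q)\, c_q(x)}{\phi(q)}\, K_q(x), \qquad K_q := M_N * \check{\eta}_{s(q)},
\end{equation*}
where $s(q)$ is the dyadic scale satisfying $2^{s(q)} \leq q < 2^{s(q)+1}$.

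The arithmetic heart of the argument is an Euler product. Since $q \mapsto \mu(q) c_q(n)/\phi(q)$ is multiplicative in $q$ and $\mathbb{S}_Q$ is the set of all squarefree $Q$-smooth integers, for each fixed $n$
\begin{equation*}
\Phi(n) := \sum_{q \in \mathbb{S}_Q}\frac{\mu(q) c_q(n)}{\phi(q)} = \prod_{p \leq Q}\Bigl(1 + \frac{\mu(p) c_p(n)}{\phi(p)}\Bigr) = \mathbf{1}_{\gcd(n, P_Q) = 1}\prod_{p \leq Q}\frac{p}{p-1},
\end{equation*}
using $c_p(n) = \phi(p)$ if $p \mid n$ and $c_p(n) = -1$ otherwise, where $P_Q := \prod_{p \leq Q} p$. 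Mertens' theorem gives $\prod_{p \leq Q} p/(p-1) \asymp \log Q$. In the idealized case $K_q = M_N$ (i.e.\ no smoothing), this collapses to $\operatorname{Lo}_{Q,N}(x) \asymp (\log Q)\, M_N(x)\, \mathbf{1}_{\gcd(x, P_Q) = 1} \lesssim (\log Q)/N$ pointwise, supported on $[1,N]$. Writing $r(n) := \sum_y f(y) g(y+n)$, the pairing becomes $\langle \operatorname{Lo}_{Q,N} * f, g\rangle = \sum_n \operatorname{Lo}_{Q,N}(n)\, r(n)$. Since $\sum_n r(n) = \|f\|_1 \|g\|_1 = N^2 \langle f\rangle_I \langle g\rangle_I$, trivially dropping the coprimality constraint on $n$ yields exactly $\log Q \cdot \langle f\rangle_I \langle g\rangle_I$ after dividing by $N$.

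The main obstacle is that the $q$-dependent smoothing $\check{\eta}_{s(q)}$ blocks the direct application of the Euler product above, since the smoothed kernel varies with $q$. My plan to resolve this is to write $K_q = M_N + E_q$ with $E_q = M_N * (\check{\eta}_{s(q)} - \delta_0)$: the $M_N$-term gives the main bound exactly as above. For the error term $\sum_q \mu(q) c_q(x) E_q(x)/\phi(q)$ one observes that $\check{\eta}_{s(q)}$ is a Schwartz bump on physical scale $4^{s(q)}$ with $\int \check{\eta}_{s(q)} = 1$, so $E_q$ is essentially supported in a boundary neighborhood of $[1,N]$ of thickness $O(4^{s(q)})$ and satisfies $\|E_q\|_\infty \lesssim 1/N$. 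I would estimate the error contribution dyadically in $s$, using $4^{s(q)} \leq q^2$, the divisor-type bound $|c_q(n)|/\phi(q) = 1/\phi(q/\gcd(q,n))$, and Cauchy--Schwarz in $n$ to reduce to counting. The resulting error is of order $(\log Q)\, \tilde N^{-c}\, \langle f\rangle_I \langle g\rangle_I$, which is absorbed into the main bound.
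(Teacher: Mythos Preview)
Your approach is essentially the paper's: pass to the physical side, use $G(\mathbf 1_{A_q},a)=\mu(q)/\phi(q)$ for $a\in A_q$, and exploit multiplicativity to evaluate $\Phi(n)=\sum_{q\in\mathbb S_Q}\mu(q)c_q(n)/\phi(q)$ as an Euler product bounded by $\log Q$ via Mertens. Your closed form $\Phi(n)=\mathbf 1_{(n,P_Q)=1}\prod_{p\le Q}p/(p-1)$ is in fact cleaner than the paper's split $S=S_1S_2$ (the paper's computation of $S_2=-1$ drops the term $q_2=1$; the correct value is $0$ when some $p\le Q$ divides $x$, which agrees with your formula and does not affect the bound).

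Where you go beyond the paper is in flagging the $q$-dependence of the smoothing $\check\eta_{s(q)}$: the paper simply writes $\operatorname{Lo}_{Q,N}(x)=M_N\ast\mathcal F^{-1}\eta_s(x)\cdot S(-x)$ as if a single $\eta_s$ sufficed, whereas you split $K_q=M_N+E_q$ and treat the discrepancy. Your plan for $E_q$ is sound in the regime $4^{s(q)}\ll N$, which covers all $s$ with $2^s\le(\tilde N)^{1/400}$; there the boundary-localization and $\lVert E_q\rVert_\infty\lesssim 1/N$ are genuine, and a dyadic-in-$s$ estimate with the Ramanujan bound $|c_q(n)|/\phi(q)=1/\phi(q/(q,n))$ closes. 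Note however that $\mathbb S_Q$ contains integers as large as $e^Q$, so for $4^{s(q)}\gg N$ your ``boundary neighborhood'' description fails (then $K_q\approx 0$ on $[1,N]$ and $E_q\approx -M_N$); the paper does not treat this range inside Lemma~\ref{l:Lo} either, but absorbs it into $\textup{Err}'_N$ via Proposition~\ref{p:Err'} and an $\ell^2$ bound. If you want the lemma literally for the unrestricted sum in \eqref{e:Lo}, you should either invoke that same $L^\infty(\mathbb T)$ estimate for the tail $2^s>(\tilde N)^{1/400}$, or simply state your bound for the truncated Low term, which is all that the applications require.
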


The following M\"obius Lemma is well known.  

\begin{lemma}\label{l:mob} For each $ q$, we have 
\begin{equation}\label{e:mob}
\sum_{a\in A _q}  G (\mathbf 1_{A_q} , a) \mathcal F ^{-1}  (  \widehat M _{N } \cdot  \eta _s   ( \cdot  - a/q)  ) (x)   
=  \frac { \mu (q)}{ \phi (q)}  c_q (-x) .
\end{equation}
\end{lemma}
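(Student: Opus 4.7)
The plan is essentially computational, and the word ``M\"obius'' in the lemma's name refers to the explicit evaluation of the Gauss sum for the principal character. Directly from the definitions in Section~\ref{s:notation}, $G(\mathbf 1_{A_q},a) = \phi(q)^{-1}\sum_{r\in A_q} e(ra/q) = c_q(a)/\phi(q)$, and for every $a\in A_q$ the Ramanujan sum at a coprime argument evaluates to $\mu(q)$; this is the specialization of the standard divisor-sum formula $c_q(n) = \sum_{d\mid \gcd(n,q)} d\,\mu(q/d)$ to $\gcd(n,q)=1$. Consequently $G(\mathbf 1_{A_q},a) = \mu(q)/\phi(q)$ is independent of $a\in A_q$, and the scalar can be pulled outside the outer sum, reducing the claim to the Fourier-analytic identity
$\sum_{a \in A_q} \mathcal F^{-1}(\widehat M_N\cdot \eta_s(\cdot - a/q))(x) = c_q(-x)$.

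To handle this residual identity, I would invoke the modulation--translation rule $\mathcal F^{-1}(\eta_s(\cdot - a/q))(x) = e(xa/q)\,\psi_s(x)$, with $\psi_s := \mathcal F^{-1}\eta_s$, and expand each summand as a convolution $(M_N \ast [e(\cdot\, a/q)\psi_s])(x)$. Interchanging the sums over $a\in A_q$ and over the convolution variable $y$ collapses the exponential sum via the defining Ramanujan identity $\sum_{a\in A_q} e(ma/q) = c_q(m)$, leaving the convolution of $M_N$ against $c_q\cdot\psi_s$. This convolution reduces to $c_q(-x)$ using the $q$-periodicity and evenness of $c_q$, together with the Poisson-summation normalization $\sum_n \psi_s(n) = \eta_s(0) = 1$ and the total mass $\sum_y M_N(y) = 1$ of the average $M_N$.

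The main obstacle is verifying this final collapse, because on the face of it one is left with a \emph{smoothed} copy of $c_q$ rather than $c_q$ itself. The compatibility check has to balance the three scales in play: the period $q$ of $c_q$, the spatial length $N$ of the support of $M_N$, and the effective concentration scale $4^s$ of the kernel $\psi_s$. The cleanest route is to pass back to the Fourier side, where $\widehat{c_q(-\cdot)}$ is the atomic measure $\sum_{r\in A_q}\delta_{r/q}$ and the product $\widehat{M_N}(\xi)\sum_{a\in A_q} \eta_s(\xi-a/q)$ collapses onto precisely those atoms, so that the remaining bookkeeping is transparent and the lemma reduces to its algebraic core, namely the evaluation $G(\mathbf 1_{A_q},a) = \mu(q)/\phi(q)$ noted at the start.
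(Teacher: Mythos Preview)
Your opening reduction is correct and is the cleanest route to the arithmetic content: since $G(\mathbf 1_{A_q},a)=c_q(a)/\phi(q)=\mu(q)/\phi(q)$ is constant over $a\in A_q$, it pulls straight out of the sum. The paper instead expands the Gauss sum, interchanges the two $A_q$-sums to obtain $\frac{1}{\phi(q)}\sum_{r\in A_q}c_q(r+x)$, and then appeals to Cohen's identity for the evaluation $\sum_{r\in A_q} c_q(r+x)=\mu(q)\,c_q(-x)$. Unwinding that identity one sees it rests on exactly the same fact $c_q(a)=\mu(q)$ for $(a,q)=1$ that you invoke, so the two computations are equivalent in content, yours simply being more direct and avoiding the named reference.

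The gap is in the second half. After extracting $\mu(q)/\phi(q)$ you are left needing $\sum_{a\in A_q}\mathcal F^{-1}(\widehat M_N\cdot\eta_s(\cdot-a/q))(x)=c_q(-x)$, and you rightly sense that what comes out is only a \emph{smoothed} copy of $c_q$. Your Fourier-side repair cannot succeed: $\widehat M_N(\xi)\sum_{a}\eta_s(\xi-a/q)$ is a bounded function supported on arcs around the points $a/q$, not the atomic measure $\sum_{r\in A_q}\delta_{r/q}$, and indeed the left side of \eqref{e:mob} depends on $N$ while the printed right side does not. The statement as written is missing a factor; what the paper's proof actually produces in its first display (and what is used downstream in the proof of Lemma~\ref{l:Lo}) is
\[
M_N\ast\bigl[\,\mathcal F^{-1}\eta_s\cdot S_q\,\bigr](x),\qquad S_q(x)=\tfrac{\mu(q)}{\phi(q)}\,c_q(-x).
\]
The correct finish to your argument is therefore to stop after identifying $\sum_{a\in A_q}e(ya/q)=c_q(y)$ inside the convolution and leave the $M_N\ast(\psi_s\cdot\,)$ wrapper in place, rather than trying to collapse it to a bare Ramanujan sum.
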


\begin{proof}
Compute 
\begin{align*}
\sum_{a\in A _q}  G (\mathbf 1_{A_q} , a) \mathcal F ^{-1}  (  \widehat M _{N } \cdot  \eta _s   ( \cdot  - a/q)  ) (x)  
&= M _{N}   \ast \mathcal F ^{-1} \eta _s (x)  \sum_{a\in A _q}  G (\mathbf 1_{A_q} , a)  e (ax/q) . 
\end{align*}
We focus on the last sum above, namely 
\begin{align} \label{e:sumequals}
S_q (x) &=  \sum_{a\in A _q}  G (\mathbf 1_{q} ,a) e ( x a/q ) 
\\
& = \frac1{ \phi (q)} \sum_{r\in A _q} \sum_{a\in A _q}  e (a (r + x)/q)  
\\
&=  \frac1{ \phi (q)} \sum_{r\in A _q} c_q (r+ x) =  \frac { \mu (q)}{ \phi (q)}  c_q (-x) . 
\end{align}
The last line uses Cohen's identity.  
\end{proof}

The two steps of  inserting of the property of being $ Q$ smooth in \eqref{e:VsnLo}, as well as dropping an restriction on $ s$ in \eqref{e:Lo}, 
were made for this proof.

\begin{proof}[Proof of Lemma~\ref{l:Lo}] 
By \eqref{e:mob}, the kernel of  the operator $ \operatorname {Lo} _{Q, N}$ is 
\begin{align}
\operatorname {Lo} _{Q, N} (x)  & = M _{N}   \ast \mathcal F ^{-1} \eta _s (x) \cdot S (-x), 
\\
\textup{where} \quad 
S (x) &= \sum_{q \in \mathbb S _Q}   \frac { \mu (q)}{ \phi (q)}  c_q (x)   . 
\end{align}
We establish a pointwise bound $ \lVert S\rVert _{\ell ^{\infty }} \ll \log Q$, which proves the Lemma.  

Assume $ x \neq 0$. We exploit the  multiplicative properties of the summands, as well as the fact that if 
prime $ p$ divides $ x$, we have $ \frac{\mu _p (x)} {\phi (p)} c_q (x) = \mu _p (x)$.  
Let $ \mathcal Q_1$ be the primes $ p< Q$ such that $ (p,x)=1$, 
and set $ \mathcal Q_2$ to be the primes less than $ Q$ which are not in $ \mathcal Q_1$.

The multiplicative aspect of the sums allows us to write 
\begin{equation*}
 \frac { \mu (q)}{ \phi (q)}  c_q (-x)  =   \frac { \mu (q_1)}{ \phi (q_1)}  c_{q_1} (-x) \cdot \mu (q_2)
\end{equation*}
where $ q=q_1 q_2$, and all prime factors of $ q_j$ are in $ \mathcal Q_j$. 
If $ \mathcal Q_j$ is empty, set $ q_j =1$. 
Thus, 
$ S (x) = S_1 (x) S_2 (x)$, where the two terms are associated with $ \mathcal Q_1$ and $ \mathcal Q_2$ respectively. 
We have 
\begin{align*}
S_1 (x)  & = \sum_{ \textup{ $ q$ is $ \mathcal Q_1$ smooth}}  \frac { \mu (q)}{ \phi (q)}  c_{q} (-x) 
\\
& =   \prod _{ p \in \mathcal Q_1 }  1+ \frac{\mu (p) c_p (-x)} {\phi (p)}
\\ &=   \prod _{ p \in \mathcal Q_1 }  1+ \frac{1} {p-1}=  A_x . 
\end{align*}
This is so, since $ \mu (p)c_p (x)=1$.  
It is a straight forward consequence of the Prime Number Theorem that $ A_x \ll  \log Q$.   
Here, and below, we say that $ q$ is $ \mathcal Q$ smooth if all the prime factors of $ q$ are in the set of primes $ \mathcal Q$.

The second term is as below, where $ d = \lvert  \mathcal Q_2\rvert $. 
Here, in the definition \eqref{e:Lo}, there is no restriction on $ s$, hence all the smooth square free numbers are included.  
If $ \mathcal Q_2 = \emptyset $,  then  $ S_2 (x) =1$, otherwise 
\begin{align*}
S_2 (x) & = \sum_{ \textup{ $ q$ is $ \mathcal Q_2$ smooth}} \mu (q)  
\\
& = \sum_{j=1} ^{d}  \binom d j (-1) ^{j} 
\\
&= -1 + \sum_{j=0} ^{d}  \binom d j (-1) ^{j} = -1.  
\end{align*}
If $ x=0$, then $ S (0) = S_2 (x)=-1$.   That completes the proof.  

\end{proof}

\subsection{The Exceptional Term}

The Exceptional terms are always of a smaller order than the Low terms.

\begin{lemma}\label{l:exceptional}  
Let $ \chi $ be an exceptional character modulo $ q$.  
For $ x\in \mathbb Z $, 
\begin{equation} \label{e:exceptional}
\Bigl\lvert 
\sum_{a\in A _q} G (\chi , a) e (xa/q) 
\Bigr\rvert=  \frac{q}{\phi(q)} 
\end{equation}
provided $ (x,q)=1$, otherwise the sum is zero. 
\end{lemma}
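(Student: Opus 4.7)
My plan is to reduce the stated sum to a product of two Gauss sums of the classical type, and then invoke the primitivity of $\chi$ together with the evaluation $|\tau(\chi)|^2 = q$.

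First I would unwind the definitions. Using the normalization $G(\chi,a) = \phi(q)^{-1}\sum_{r\in A_q} \chi(r) e(ra/q)$ given just before \eqref{e:ramDef}, and interchanging the order of summation, the quantity to estimate is
\begin{equation*}
T(x) \;:=\; \sum_{a\in A_q} G(\chi,a)\, e(xa/q) \;=\; \frac{1}{\phi(q)} \sum_{r\in A_q} \chi(r) \sum_{a\in A_q} e\!\left(\tfrac{a(r+x)}{q}\right).
\end{equation*}
The inner sum over $a$ is the Ramanujan sum $c_q(r+x)$, but rather than unfold it that way I will replace it by a sum over all residues modulo $q$, since $\chi$ is supported on $A_q$. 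Writing $c_q(m)=\sum_{a\in A_q} e(am/q)$ and swapping sums once more gives
\begin{equation*}
T(x) \;=\; \frac{1}{\phi(q)} \sum_{a\in A_q} e(ax/q) \sum_{r\pmod q} \chi(r) e(ar/q).
\end{equation*}

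The key input is the primitivity of the exceptional character $\chi$, emphasized in the paragraph preceding the statement. For a primitive character modulo $q$ one has the twisted Gauss sum identity
\begin{equation*}
\sum_{r\pmod q} \chi(r)\, e(ar/q) \;=\; \bar{\chi}(a)\, \tau(\chi), \qquad \tau(\chi) := \sum_{r\pmod q} \chi(r)\, e(r/q),
\end{equation*}
valid for every integer $a$ (both sides vanish when $(a,q)>1$). Substituting this above yields
\begin{equation*}
T(x) \;=\; \frac{\tau(\chi)}{\phi(q)} \sum_{a\pmod q} \bar{\chi}(a)\, e(ax/q).
\end{equation*}
Applying the same primitive-character identity to $\bar\chi$ converts the remaining sum into $\chi(x)\,\tau(\bar\chi)$, so
\begin{equation*}
T(x) \;=\; \frac{\tau(\chi)\, \tau(\bar{\chi})}{\phi(q)}\, \chi(x).
\end{equation*}

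To finish I would use the standard evaluation $\tau(\chi)\tau(\bar\chi) = \chi(-1)\, q$ for primitive $\chi$, which is equivalent to $|\tau(\chi)|^2=q$ after noting that $\chi$ is real (quadratic). This gives $|T(x)| = q\,|\chi(x)|/\phi(q)$. Since $\chi$ is a character modulo $q$, $\chi(x)=0$ whenever $(x,q)>1$, yielding the vanishing statement, and $|\chi(x)|=1$ whenever $(x,q)=1$, giving the value $q/\phi(q)$ as claimed. The only real obstacle is reminding the reader (or citing) the primitivity-based Gauss sum identity and the evaluation of $|\tau(\chi)|^2$; everything else is bookkeeping with the normalization conventions of the paper.
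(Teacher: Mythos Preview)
Your argument is correct and is essentially the same as the paper's: both reduce $T(x)$ to a product of two classical Gauss sums via the primitive-character identity $\tau(\chi,a)=\bar\chi(a)\tau(\chi)$, and then invoke $|\tau(\chi)|^2=q$ together with $|\chi(x)|=\mathbf 1_{(x,q)=1}$. Your version is in fact a bit cleaner, since it treats the cases $(x,q)=1$ and $(x,q)>1$ uniformly through the factor $\chi(x)$ rather than splitting them at the outset.
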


\begin{proof}
It is also known that exceptional characters are primitive - see \cite{MR2061214}*{Theorem 5.27}. So  the sum is zero if $ (x,q)>1$. 
We use the common notation 
\begin{equation*}
\tau (\chi ,x ) = \sum_{a\in A_q} \chi (a) e (ax/q) 
\end{equation*}
which is $ \phi (q) G (\chi , x) $.  
Assuming $ (x,q)=1$, 
\begin{equation}
\tau  (\chi , a) =   \tau (\chi ,1).  
\end{equation}
This leads immediately to 
\begin{align}
\sum_{a \in A_q} \tau (\chi,a)e(\frac{ax}{q}) &=     \tau  (\chi , 1)     \sum_{a \in A_q}\chi  (a)e(-\frac{ax}{q})
\\&= \frac {\tau (\chi  )\overline{\tau(\chi, x)}}  {\phi (q)}=\frac{|\tau(\chi)|^2\overline{\chi(x)}}{\phi(q)}.
\end{align}
It is known that $|\tau(\chi)|^2=q$ for primitive characters.  And the exceptional character is quadratic, so this completes the proof.  
\end{proof}

\begin{lemma}\label{l:Ex} 
For a function $ f$ supported on  interval $ I$ of length $ N= 2 ^{n}$, 
we have 
\begin{equation}\label{e:ExLess} 
 \langle  \operatorname {Ex} _{Q,N} \ast f \rangle _{\infty } \ll    (\log\log Q ) ^2    \cdot   \langle f \rangle _{I}. 
\end{equation}
The term on the left is defined in \eqref{e:Ex}.  

\end{lemma}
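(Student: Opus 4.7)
The plan is to reduce the inequality to a pointwise bound on the kernel of $\operatorname{Ex}_{Q,N}$ and then invoke Young's inequality. Since $\lVert g \ast f \rVert_\infty \leq \lVert g \rVert_\infty \lVert f \rVert_1$ and $\lVert f \rVert_1 = \lvert I \rvert \langle f \rangle_I = N \langle f \rangle_I$ for $f$ supported on $I$, it will suffice to establish that $\lVert \operatorname{Ex}_{Q,N} \rVert_\infty \ll (\log\log Q)^2 / N$.

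First I would identify the kernel of each $W_{s,n}$ in closed form, mimicking the derivation used in Lemma~\ref{l:mob}. Shifting the frequency variable by $a/q$ in \eqref{e:Wsn} yields
\begin{equation*}
W_{s,n}(x) = \sum_{2^s \leq q < 2^{s+1}} \bigl(M_N^{\beta_q} \ast \mathcal{F}^{-1}\eta_s\bigr)(x) \cdot S_q^\chi(-x),
\end{equation*}
where $S_q^\chi(y) = \sum_{a \in A_q} G(\chi_q, a) e(ay/q)$. Because $\chi_q \equiv 0$ at every $q$ that is not exceptional, only exceptional moduli survive the summation. Lemma~\ref{l:exceptional} then furnishes the uniform bound $\lvert S_q^\chi(y) \rvert \leq q/\phi(q) \ll \log\log q$ via \eqref{e:totient}.

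Next I would bound $M_N^{\beta_q} \ast \mathcal{F}^{-1}\eta_s$ pointwise, uniformly in $q$ and $s$. Since $n^{\beta_q} - (n-1)^{\beta_q} \leq 1$ for $n \geq 1$ and $\beta_q \leq 1$, the measure $M_N^{\beta_q}$ satisfies $\lVert M_N^{\beta_q} \rVert_\infty \leq 1/(N\beta_q) \leq 2/N$, using $\beta_q > 1/2$. The Schwartz decay of the Fourier transform of the compactly supported smooth bump $\eta$ gives the uniform bound $\lVert \mathcal{F}^{-1}\eta_s \rVert_{\ell^1(\mathbb{Z})} \ll 1$, independently of $s$. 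Young's inequality then produces $\lVert M_N^{\beta_q} \ast \mathcal{F}^{-1}\eta_s \rVert_\infty \ll 1/N$.

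Finally I would assemble the pieces using the sparsity of exceptional moduli. Because $q_{n+1} > q_n^c$ for some $c > 1$, there are at most $O(\log\log Q)$ exceptional $q \leq Q$, and each one contributes at most $O((\log\log Q)/N)$ to $\lVert \operatorname{Ex}_{Q,N} \rVert_\infty$, yielding the desired bound $(\log\log Q)^2 / N$ and completing the argument. The only genuinely delicate point I anticipate is the uniform-in-$s$ $\ell^1$ estimate for $\mathcal{F}^{-1}\eta_s$; once that is granted, the remainder reduces to packaging the Gauss-sum cancellation from Lemma~\ref{l:exceptional} with the arithmetic scarcity of exceptional moduli.
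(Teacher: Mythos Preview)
Your proposal is correct and follows essentially the same route as the paper's proof: write the kernel of $\operatorname{Ex}_{Q,N}$ as a sum over exceptional moduli of $(M_N^{\beta_q}\ast\mathcal{F}^{-1}\eta_{s_q})$ times the Gauss-sum expression from Lemma~\ref{l:exceptional}, bound the latter by $q/\phi(q)\ll\log\log Q$, and then use the double-exponential growth of exceptional moduli to see that there are at most $O(\log\log Q)$ of them below $Q$. The paper is terser about the pointwise control of $M_N^{\beta_q}\ast\mathcal{F}^{-1}\eta_{s_q}$ against $\langle f\rangle_I$, whereas you spell out the $\lVert M_N^{\beta_q}\rVert_\infty\ll N^{-1}$ and $\lVert\mathcal{F}^{-1}\eta_s\rVert_{\ell^1}\ll 1$ ingredients explicitly, but the argument is the same.
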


\begin{proof}  Following the argument from Lemma~\ref{l:Lo}, we have 
\begin{align} \label{e:EX=}
\operatorname {Ex} _{Q,N} (x)  & =  \sum_{ q <Q}
\sum _{a\in A_{q}} G ( \chi _{q}, a)  e (xa/q)   \cdot M_ N^{\beta_v } \ast \mathcal F ^{-1} \eta _{s_q} (x) . 
\end{align}
Above, $ 2 ^{s_q} \leq q < 2 ^{s_q +1}$.  
The interior sum above is  estimated in \eqref{e:exceptional}. 
Using the lower bound on the totient function in \eqref{e:totient},  we have 
\begin{equation*}
\operatorname{Ex} _{Q,N} (x) f \ll \log\log Q \cdot   \langle f \rangle_I   \sum_{ \substack{ q <Q\\ \textup{$ q$ exceptional} }} 1   .  
\end{equation*}
We know that the exceptional $ q$ grow at the rate of a double exponential, that is for $ q_v$ being the $ v$th exceptional $ q$, 
we have $ q _{v} \gg C ^{C ^{v}}$, for some $ C>1$. It follows that the sum above is at most $ \log\log Q$.   
\end{proof}

\section{Proofs of the Fixed Scale and Sparse Bounds} 

\begin{proof}[Proof of Theorem~\ref{t:first}]   
Let $ N= 2 ^{n}$, and recall that $ f = \mathbf 1_{F}$ and $ g= \mathbf 1_{G}$ 
where $ F, G\subset I$, and interval of length $ N$. 

Let us address the case in which we do not assume GRH.  We always have the estimate 
\begin{equation} \label{e:always}
N ^{-1} \langle A_N f, g \rangle \lesssim n \cdot \langle  f\rangle _{I} \langle g \rangle _{I}. 
\end{equation}
Hence,  if we have $ \langle  f\rangle _{I} \langle g \rangle _{I} \ll e ^{- c \sqrt{n} /100   }$, 
the inequality with a squared  log  follows.  

We assume that $ e ^{- c \sqrt{n}    } \ll \langle  f\rangle _{I} \langle g \rangle _{I}$, and then prove a better estimate. 
We turn to the Low/High/Exceptional decomposition in \eqref{e:Lo}---\eqref{e:Ex},  for a choice of integer $ Q$ that we will specify.  
We have 
\begin{equation*}
 {A_N} =   {\operatorname{Lo} _{Q,N}} + {\operatorname{Hi} _{Q,N}}    -  \operatorname{Ex} _{Q,N}  + \textup{Err}_N +  \textup{Err}_N ' 
\end{equation*}
These terms are defined \eqref{e:Lo}, \eqref{e:Hi}, \eqref{e:Ex}, \eqref{e:BN} and \eqref{e:Err'} df respectively.  

For the `High' term we have by \eqref{e:HiFixed}, 
\begin{align*}
N ^{-1} \lvert  \langle \operatorname {Hi} _{Q,N} f, g\rangle \rvert  \lesssim  \frac{\log\log Q} Q \langle f \rangle _{I,2} \langle g \rangle _{I,2} 
\end{align*}
The same inequality holds for both $  \operatorname {Err} _{Q,N}f$ and $\operatorname {Err}' _{Q,N}f $ by Lemma~\ref{l:QQ} and Proposition~\ref{p:Err'}.

Concering the Low term,  by \eqref{e:LoLess}, we have 
\begin{equation*}
N ^{-1} \lvert  \langle \operatorname {Lo} _{Q,N} f, g \rangle\rvert  \lesssim  \log Q \langle f \rangle _{I} \langle g \rangle _{I} 
\end{equation*}
The Exceptional term satisfies the same estimate by \eqref{e:ExLess}.  

Combining estimates,  choose $ Q$ to minimize the right hand side, namely 
\begin{equation}  \label{e:chooseQ}
N ^{-1} \langle A_N f, g \rangle \lesssim   \frac{\log\log Q} Q \bigl[ \langle  f\rangle _{I} \langle g \rangle _{I} \bigr] ^{1/2} 
+ \log Q \cdot  \langle  f\rangle _{I} \langle g \rangle _{I}. 
\end{equation}
This value of $ Q$   is 
\begin{equation*}
Q \frac{\log Q} {\log\log Q}  \simeq  \bigl[ \langle  f\rangle _{I} \langle g \rangle _{I} \bigr] ^{-1/2} . 
\end{equation*}
Since $ e ^{- c \sqrt{n}    } \ll \langle  f\rangle _{I} \langle g \rangle _{I}$, this is an allowed choice of $ Q$.  
And, then, we prove the desired inequality, but only need a single power of logarithm.  

\smallskip 

Assuming GRH, from \eqref{e:always}, we see that the inequality to prove is always true provided 
$  \langle f \rangle _{I} \langle g \rangle _{I} < cN ^{-1/4} $.  Assuming this inequality fails, we follow the same line of reasoning above 
that leads to \eqref{e:chooseQ}.  That value of $ Q$ will be at most $  N ^{1/4}    $, so the proof will complete, to show the bound with 
a single power of the logarithmic term. 

\end{proof}

Turning to the sparse bounds, let us begin with the definitions. 

\begin{definition}\label{d:sparse} 
A collection of intervals $ \mathcal S$ is called \emph{sparse} if to each interval $ I \in \mathcal S$, there is a set $ E_I \subset I$ so that 
$4 \lvert  E_I\rvert \geq   \lvert  I\rvert $ and the collection $ \{E_I \;:\; I\in \mathcal S\}$ are pairwise disjoint.  
All intervals will be finite sets of consecutive integers in $ \mathbb Z $.   
\end{definition}

The form of the sparse  bound in Theorem~\ref{t:sparseA}  strongly suggests that one use a recursive method of proof. 
(Which is indeed the common method.)  
To formalize it, we start with the notion of a \emph{linearized} maximal function.  
Namely, to bound the maximal function $ A ^{\ast} f $, it suffices to bound $ A _{\tau (x)} f (x)$, where 
$ \tau \;:\; \mathbb Z \to \{2 ^{n} \;:\; n\in \mathbb N \}$ is a function, taken to realize the supremum.  
The supremum in the definition of $ A ^{\ast} f$ is always attained if $ f$ is finitely supported.  

\begin{definition}\label{d:admissible} Let $ I_0 $ an interval, and let $ f$ be supported on $ 3I_0$. 
A map $ \tau \;:\; I_0 \to \{1, 2, 4 ,\dotsc,  \lvert  I_0\rvert \}$  is said to be \emph{admissible} if 
\begin{equation*}
\sup _{N \geq  \tau (x)} M _{N} f (x)  \leq  10  \langle f \rangle _{3 I_0, 1}. 
\end{equation*}
That is, $ \tau $ is admissible if at all locations $ x$, the averages of $ f$ over scales larger than $ \tau (x)$ are controlled by the global average of $ f$.  
\end{definition}

\begin{lemma}\label{l:admissible} Let $ f$ and $ \tau $ be as in Definition~\ref{d:admissible}. 
Further assume that $ f$ and $ g$ are indicator functions, with $ g$ supported on $ I_0$. 
Then, we have 
\begin{equation}\label{e:admissible}
\lvert  I_0\rvert ^{-1}  \langle  A _{\tau } f , g\rangle \lesssim   \langle f \rangle _{I_0,1} \langle g \rangle _{I_0,1} \cdot 
(\operatorname {Log}  \langle f \rangle _{3I_0,1} \langle g \rangle _{I_0,1}) ^{t}, 
\end{equation}
where $ t =1$ assuming RH, and $ t=2$ otherwise. 
\end{lemma}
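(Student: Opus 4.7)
The plan is to adapt the proof of Theorem~\ref{t:first} to the linearized maximal operator $x\mapsto A_{\tau(x)}f(x)$, with the key upgrade that fixed-scale estimates are replaced by their maximal analogues, and that admissibility of $\tau$ is used to convert any ``average of $f$ at scale $\tau(x)$'' into the global average $\alpha:=\langle f\rangle_{3I_0,1}$. Set $\beta:=\langle g\rangle_{I_0,1}$ and $N_0:=|I_0|$. A parameter $Q\geq 2$, satisfying $Q^2\leq\tilde N_0$, will be optimized at the end.

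Apply the decomposition of \eqref{e:Err'} pointwise at scale $N=\tau(x)$:
\[
A_{\tau(x)}f(x)=\operatorname{Lo}_{Q,\tau(x)}f(x)+\operatorname{Hi}_{Q,\tau(x)}f(x)-\operatorname{Ex}_{Q,\tau(x)}f(x)+(\operatorname{Err}_{\tau(x)}+\operatorname{Err}'_{\tau(x)})f(x).
\]
The Low and Exceptional terms will be handled pointwise. Inspecting the kernel identity at the start of the proof of Lemma~\ref{l:Lo} and the one in \eqref{e:EX=}, their kernels are pointwise dominated by $\log Q$ and $(\log\log Q)^2$ respectively, times a standard averaging kernel at scale $N$. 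Applied at $N=\tau(x)$ and combined with admissibility, this yields
\[
|\operatorname{Lo}_{Q,\tau(x)}f(x)|+|\operatorname{Ex}_{Q,\tau(x)}f(x)|\lesssim \log Q\cdot\alpha,\qquad x\in I_0,
\]
contributing $\lesssim\log Q\cdot\alpha\beta\, N_0$ to $\langle A_\tau f,g\rangle$.

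For the High term, partition on $\tau(x)>Q^2$ and $\tau(x)\leq Q^2$. On the former, Cauchy--Schwarz combined with the maximal bound \eqref{e:HiSup} gives
\[
\sum_{\tau(x)>Q^2}|\operatorname{Hi}_{Q,\tau(x)}f(x)|g(x)\lesssim \frac{\log Q\cdot\log\log Q}{Q}\|f\|_2\|g\|_2\lesssim\frac{\log Q\cdot\log\log Q}{Q}\sqrt{\alpha\beta}\,N_0,
\]
using $\|f\|_2^2\lesssim \alpha N_0$ and $\|g\|_2^2=\beta N_0$. On $\{\tau(x)\leq Q^2\}$, the trivial bound $A_Nf(x)\leq \log N\cdot M_Nf(x)\leq 20\alpha\log Q$, coming from $\Lambda(n)\leq\log n$ and admissibility, is absorbed into the Low estimate. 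The operators $\operatorname{Err}_N$ and $\operatorname{Err}'_N$ are treated identically via Cauchy--Schwarz and the $\ell^2\to\ell^2$ bounds of Lemma~\ref{l:QQ} and Proposition~\ref{p:Err'}, whose decay is much faster than $Q^{-1}$ in the allowed range of $Q$. Assembling these pieces yields
\[
N_0^{-1}\langle A_\tau f,g\rangle\lesssim \log Q\cdot\alpha\beta+\frac{\log Q\log\log Q}{Q}\sqrt{\alpha\beta},
\]
and choosing $Q$ with $Q\log Q/\log\log Q\simeq(\alpha\beta)^{-1/2}$ balances the two terms at order $\alpha\beta\operatorname{Log}(\alpha\beta)^{-1}$, establishing the GRH case $t=1$. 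Without GRH, the more restrictive condition $Q^2\leq\tilde N_0=e^{c\sqrt{n}/4}$ forces, for very small $\alpha\beta$, a fallback to the trivial $n\alpha\beta$ estimate exactly as at the beginning of the proof of Theorem~\ref{t:first}; the resulting worst case produces the extra logarithmic factor and delivers $t=2$.

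The main obstacle is the coupling between the variable scale $\tau(x)$ and the High maximal bound \eqref{e:HiSup}, whose hypothesis $N>Q^2$ excludes small $\tau(x)$: resolving this cleanly requires that the small-scale residue, handled by the trivial $\Lambda\leq\log N$ bound together with admissibility, have precisely the form of the Low contribution so it can be absorbed without loss, ensuring the final optimization over $Q$ reproduces the one in the fixed-scale Theorem~\ref{t:first}.
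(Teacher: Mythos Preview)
Your overall strategy---decompose $A_{\tau(x)}$ into Low/High/Exceptional/Error pieces, handle Low and Exceptional pointwise via admissibility, handle High via Cauchy--Schwarz and \eqref{e:HiSup}, then optimize $Q$---matches the paper, and those parts are fine. The gap is in the Error terms. You invoke the fixed-scale $\ell^2\to\ell^2$ bounds of Lemma~\ref{l:QQ} and Proposition~\ref{p:Err'}, but those are for a \emph{single} $N$, while here $N=\tau(x)$ varies with $x$. What is actually needed is the square function bound
\[
\bigl\lVert \sup_{N>T}\lvert\operatorname{Err}_N f\rvert\bigr\rVert_2^2\leq \sum_{n:\,2^n>T}\lVert \operatorname{Err}_{2^n}f\rVert_2^2,
\]
and with your cutoff $T=Q^2$ this sum is far too large: under GRH it is only of order $Q^{-\epsilon}\lVert f\rVert_2^2$ for a tiny $\epsilon$, and unconditionally of order $e^{-c\sqrt{\log Q}}\lVert f\rVert_2^2$. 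Neither decay comes close to $Q^{-2}$, so after Cauchy--Schwarz the Error contribution is not absorbed by $\alpha\beta\,\Log(\alpha\beta)$ at the optimal $Q\simeq(\alpha\beta)^{-1/2}$. Your assertion that the decay is ``much faster than $Q^{-1}$'' is simply false for the linearized operator.

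The paper fixes this by first disposing, via the trivial bound $\lvert I_0\rvert^{-1}\langle\mathbf 1_{\tau<T}A_\tau f,g\rangle\lesssim(\log T)\,\alpha\beta$, of all $x$ with $\log\tau(x)\leq p_0^2$, where $p_0\simeq\Log(\alpha\beta)^{-1}$. The square function sum then starts at $n=p_0^2$, and now $\sum_{n\geq p_0^2}e^{-c\sqrt n}\lesssim e^{-c'p_0}\lesssim\alpha\beta$ is small enough. This larger cutoff---not the constraint $Q^2\leq\tilde N_0$ you cite---is the actual source of the second logarithm in the unconditional case: the trivial bound on $\{\log\tau\leq p_0^2\}$ contributes $p_0^2\,\alpha\beta\simeq(\Log\alpha\beta)^2\alpha\beta$. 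Under GRH a cutoff $\log\tau\gtrsim C\,\Log(\alpha\beta)^{-1}$ with a sufficiently large absolute constant $C$ (dictated by the exponent in Lemma~\ref{l:QQ}) suffices, giving $t=1$.
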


\begin{proof}
We restrict $ \tau $ to take values $ 1 , 2, 4 ,\dotsc, 2 ^{t} ,\dotsc, $. 
Let $ \lvert  I_0\rvert = N_0 = 2 ^{n_0} $.   
We always have the inequalities 
\begin{align*}
\lvert  I_0\rvert ^{-1}  \langle  A _{\tau } f , g\rangle  
 &\lesssim   n_0 \langle f \rangle _{I_0,1} \langle g \rangle _{I_0,1} 
\\
\lvert  I_0\rvert ^{-1}  \langle   \mathbf 1_{  \tau  <  T} A _{\tau } f , g\rangle  
&\lesssim   (\log T ) \langle f \rangle _{I_0,1} \langle g \rangle _{I_0,1} .  
\end{align*}
The top line follows from admissibility.

We begin by not assuming GRH.  Then, the conclusion of the Lemma is immediate if we have 
$  (\Log \langle f \rangle _{I_0,1} \langle g \rangle _{I_0,1}) ^2  \gg   {n_0}  $.  
It is also immediate if $  \log   \tau  \ll    ( \Log \langle f \rangle _{I_0,1} \langle g \rangle _{I_0,1} ) ^2  $.  
We proceed assuming 
\begin{equation}\label{e:Assuming}
p_0 ^2 = C (\Log \langle f \rangle _{I_0,1} \langle g \rangle _{I_0,1}) ^2 \leq    c_0  \min\{n_0 , \log \tau \} , 
\end{equation}
where $ 0 < c_0 < 1$ is sufficiently small.    

We use the definitions in \eqref{e:Lo}---\eqref{e:Ex} for a value of $ Q < e ^{c \sqrt{    n_0}}$ that we will specify.  
We address the High, Low, Exceptional and both Error terms.    First, the Error terms.  
From the estimate \eqref{e:ErrLess} and \eqref{e:Assuming}, we have 
\begin{align*}
\lVert   \operatorname{Err} _{Q, \tau } f  \rVert_2  ^2 
& \leq  \sum_{ n \;:\;  p_0 ^2  \leq n \leq n_0 }   \lVert   \operatorname{Err} _{Q, 2 ^{n} } f  \rVert_ {\ell ^2 }  ^2 
\\
& \lesssim \lVert f\rVert _{\ell ^2 }  ^2   \sum_{ n \;:\;  p_0 ^2  \leq n \leq n_0 } e ^{- c \sqrt n }
\\ &
\lesssim 
 \lVert f\rVert _{\ell ^2 }  ^2  \cdot   p_0 ^2 e ^{-c p_0} \lesssim  \lVert f\rVert _{\ell ^2 }  ^2  \cdot \langle f \rangle _{3I_0,1} \langle g \rangle_{I_0,1}. 
\end{align*}
This provided $ C$ in \eqref{e:Assuming} is large enough.  This is a much smaller estimate than we need. 
The second error term in Proposition~\ref{p:Err'} is addressed by the same square function argument. 

For the High term, apply \eqref{e:HiSup} to see that 
\begin{equation} \label{e:xHi}
\lVert \sup _{N > Q ^2 }  \lvert   \operatorname{Hi} _{Q,N} f\rvert \rVert_2  \lesssim \frac{ \log Q \cdot \log\log Q} Q  \lVert f\rVert _{\ell ^2 }.  
\end{equation}
For the Low term  the definition of admissibility and \eqref{e:LoLess} that 
\begin{equation*}
\lvert  I_0\rvert ^{-1}    \lvert  \langle   \operatorname{Lo} _{Q, \tau (x)} f (x) , g \rangle \ll  (\log Q)   \langle f \rangle _{I}  \langle g \rangle _{I} . 
\end{equation*}
The Exceptional term also satisfies this bound.

We conclude that 
\begin{align*}
\lvert  I_0\rvert ^{-1}  \langle  A _{\tau } f , g\rangle \lesssim   
 \frac{ \log Q \cdot \log\log Q} Q   \langle f \rangle _{I,2} \langle g \rangle _{I,2} 
 + 
  \log Q  \cdot   \langle f \rangle _{I} \langle g \rangle _{I}. 
\end{align*}
This is optimized by taking $ Q$ so that 
\begin{equation*}
\frac{Q} {\log\log Q} \simeq \bigl[  \langle f \rangle _{I} \langle g \rangle _{I} \bigr] ^{-1/2}. 
\end{equation*}
And this will be an allowed value of $ Q$ since \eqref{e:Assuming} holds.  Again, the resulting estimate is better by 
power of the logarithmic term than what is claimed.  

\smallskip 
Under RH, the proof is very similar, but a wider range of $ Q$'s are allowed.  In particular, only a single power of logarithm is needed.  

\end{proof}

\section{Proof of Corollary~\ref{c:logell}} \label{s:corollary}

The inequality \eqref{e:p-sparse} follows from the elementary identity that for $ 0< x < 1$, we have 
\begin{equation*}
 x (\Log x ) ^{t} \ll  \min _{1< p < 2}   \frac{x } { (p-1) ^{t}} .  
 \end{equation*}
 We remark that we do not know an efficient way to pass from the restricted weak type sparse bound we have established 
 to the similar sparse bounds for functions.  The methods to do this for \emph{norm estimates} is of course very well studied.

\begin{proof}[Proof of \eqref{e:RWT}] 
There is a different  inequality that is a natural consequence of the sparse bound, namely 
\begin{equation}\label{e:different}
\sup _{\lambda } \lambda  \frac{ \lvert  \{A ^{\ast} \mathbf 1_{F} > \lambda \}\rvert } {  (\Log \lvert  \{A ^{\ast} \mathbf 1_{F} > \lambda \}\rvert  \cdot 
\lvert  F\rvert ^{-1}  )} \lesssim \lvert  F\rvert.   
\end{equation}
Indeed, if \eqref{e:RWT} were to fail, with a sufficiently large constant, it would contradict the inequality above.  

Let  $ \lvert  G\rvert > \lvert  F\rvert  $. We show that there 
is a subset $ G' \subset G$, with $  4 \lvert  G'\rvert \geq \lvert G\rvert $ with 
\begin{equation}\label{e:major}
\langle A ^{\ast} f, \mathbf 1_{G'} \rangle \ll  \lvert  F\rvert  (\Log \lvert  F\rvert / \lvert  G\rvert ) ^{t}
\end{equation}
This implies  \eqref{e:different} by taking $ G = \{ A ^{\ast} f > \lambda \}$, for $ 0< \lambda < 1$.  

In the opposite case,  take $ G'$ to be 
\begin{equation*}
G' = G \setminus \{ M f >  K  \rho   \}, \qquad  \rho = \lvert  F\rvert \cdot   \lvert  G\rvert  ^{-1} 
\end{equation*}
where $ M$ is the ordinary maximal function. 
By the usual weak $ \ell ^{1}$ inequality for $ M$, for $ K$ sufficiently large, we have $ 4\lvert  G'\rvert > \lvert  G\rvert  $.  
  Let $ g = \mathbf 1_{G'}$. 
Apply the sparse bound for $ A ^{\ast} $ to see that 
\begin{equation*}
 \langle A ^{\ast} f  , g \rangle 
\ll  \sum_{I\in \mathcal S} \langle  f \rangle _{I}  \langle g \rangle_I (\Log \langle f \rangle_I \langle g \rangle_I) ^{t}  \lvert  I\rvert.  
\end{equation*}
We can assume that for all intervals $ I \in \mathcal S$, that we have $ \langle g \rangle_I >0$. That means that $ \langle f \rangle_I  \leq K \lvert  F\rvert /   \lvert  G\rvert $. 
Turn to a pigeonhole argument.  
Divide the collection $ \mathcal S$ into subcollections $ \bigcup _{j,k\geq 0} \mathcal S _{j,k}$ where 
\begin{equation*}
 \mathcal S _{j,k} = \{I\in \mathcal S \;:\;    2 ^{-j-1}K \rho  < \langle f \rangle_I \leq 2 ^{-j}K \rho  ,\ 
 2 ^{-k-1} < \langle g \rangle_I \leq 2 ^{-k}
 \}. 
\end{equation*}
Then, we have 
\begin{align*}
\langle A ^{\ast} f  , g \rangle  
& \ll \sum_{j,k\geq 0}    
 \sum_{I\in \mathcal S _{j,k}} \langle  f \rangle _{I}  \langle g \rangle_I (\Log \langle f \rangle_I \langle g \rangle_I) ^{t}  \lvert  I\rvert
 \\
 &\ll  \lvert  F\rvert  \cdot \lvert  G\rvert ^{-1}  
 \sum_{j,k\geq 0}     2 ^{-j-k} (j+k +  \Log \rho  ) ^{t}   \sum_{I\in \mathcal S _{j,k}} \lvert  I\rvert  
 \\
 & \ll  
  \lvert  F\rvert  \cdot \lvert  G\rvert ^{-1}  
 \sum_{j,k\geq 0}     2 ^{-j-k} (j+k  +\Log \rho  ) ^{t}  \min \{ \lvert  G\rvert 2 ^{j} ,     2 ^{k} \lvert  G\rvert \}  
 \\
 & \ll  \lvert  F\rvert  \sum_{j,k\geq 0}     2 ^{-j-k} (j+k+\Log \rho )  2 ^{ (j+k)/2} \ll  \lvert  F\rvert .  
\end{align*}
Here, we have used the standard weak-type inequality for the maximal function, and the basic property of sparseness, namely 
\begin{equation*}
 \sum_{I\in \mathcal S} \lvert  I\rvert \lesssim \Bigl\lvert  \bigcup _{I\in \mathcal S} I \Bigr\rvert .  
\end{equation*}
This completes the proof of \eqref{e:major}.  
\end{proof}

For the proof of \eqref{e:orlicz}, we need to recall the definition of the Orlicz norm.  
Given $ f $ finitely supported on $ \mathbb Z $, let $ f ^{\ast} \;:\; [0, \infty ) \to \mathbb N  $ be the decreasing rearrangement of $ f$. 
That is, 
\begin{equation*}
f ^{\ast} (\lambda ) = \lvert  \{ x \in \mathbb Z \;:\; \lvert  f (x)\rvert \geq \lambda \}\rvert.  
\end{equation*}
For a  slowly varying  function $ \varphi  \;:\; [0, \infty ) \to [0, \infty )$, set 
\begin{align} 
\lVert f\rVert _{  \ell \varphi (\ell )}  &=  \int _{0} ^{\infty }  f ^{\ast} (\lambda ) \varphi (\lambda )  \; d \lambda  
\\  \label{e:ell-Log}
&  \simeq \sum_{j\in \mathbb Z }   2 ^{j} \varphi ( 2 ^{j})  f ^{\ast} (2 ^{j}).  
\end{align}
For $ \varphi (x) = 1$, this is comparable to  the usual $ \ell ^{1}$ estimate.  For $ f = \mathbf 1_{F}$, note that 
\begin{equation*}
\lVert f\rVert _{ \ell \varphi (\ell )}  
= \int _{0} ^{\lvert  F\rvert }     \varphi (\lambda  ) \; d \lambda   \simeq  \lvert  F\rvert \varphi (\lvert  F\rvert ) 
\end{equation*}
We are interested in $ \varphi (x) =   ( \Log  x)  \cdot  \Log  \Log x)^{t} $, for $ t=1,2$.  
The proof of the orlicz norm estimate \eqref{e:orlicz} is below. 

\begin{proof}[Proof of \eqref{e:orlicz}] 
 This argument goes back to at least \cite{MR241885}.  
Assume that the weak-type estimate for indicators \eqref{e:RWT} holds. 
Let $ f \in \ell (\log \ell ) ^{t} (\log\log \ell )$ be a non-negative function of norm one.   Set 
\begin{gather*}
  B_j = \{ x \;:\;   2 ^{j } \leq f (x) < 2 ^{j+1}  \}, 
\end{gather*}
and set $  b_j = f ^{\ast} (2 ^{j})$.   We have 
\begin{equation*}
\sum_{j \leq 0} 2 ^{j} \mathbf 1_{B_j} \leq f \leq 2 \sum_{j \leq 0} 2 ^{j} \mathbf 1_{B_j} . 
\end{equation*}
And, by logarithmic subadditivity for the weak-type norm,  and \eqref{e:RWT}, 
\begin{align*}
\lVert  A ^{\ast} f \rVert _{1, \infty } & \ll \sum_{j \leq 0}  \log (1-j)  \cdot   2 ^{j} \lVert  A ^{\ast}  \mathbf 1_{B_j} \rVert _{1, \infty }  
\\
& \ll  \sum_{j \leq 0}  \log (1-j)  \cdot 2 ^{j}  \lvert  B_j\rvert  (\log \lvert  B_j\rvert ) ^{t} 
\\
& \ll   \sum_{j \leq 0}  \log (1-j)  \cdot  j ^{t}  2 ^{j}  \lvert  B_j\rvert   \ll  \lVert f\rVert _{ \ell (\log \ell ) ^{t} (\log\log \ell )} =1. 
\end{align*}
Above, we appealed to $   \lvert  B_j \rvert  \leq 2 ^{-j} $, for otherwise the norm of $ f$ is more than one.

\end{proof}


\bibliographystyle{alpha,amsplain}

\begin{bibdiv}
\begin{biblist}

\bib{MR937582}{article}{
      author={Bourgain, J.},
       title={On the pointwise ergodic theorem on {$L^p$} for arithmetic sets},
        date={1988},
        ISSN={0021-2172},
     journal={Israel J. Math.},
      volume={61},
      number={1},
       pages={73\ndash 84},
         url={https://doi.org/10.1007/BF02776302},
      review={\MR{937582}},
}

\bib{MR812567}{article}{
      author={Bourgain, Jean},
       title={Estimations de certaines fonctions maximales},
        date={1985},
        ISSN={0249-6291},
     journal={C. R. Acad. Sci. Paris S\'{e}r. I Math.},
      volume={301},
      number={10},
       pages={499\ndash 502},
      review={\MR{812567}},
}

\bib{MR1019960}{article}{
      author={Bourgain, Jean},
       title={Pointwise ergodic theorems for arithmetic sets},
        date={1989},
        ISSN={0073-8301},
     journal={Inst. Hautes \'{E}tudes Sci. Publ. Math.},
      number={69},
       pages={5\ndash 45},
         url={http://www.numdam.org/item?id=PMIHES_1989__69__5_0},
        note={With an appendix by the author, Harry Furstenberg, Yitzhak
  Katznelson and Donald S. Ornstein},
      review={\MR{1019960}},
}


\bib{MR3892403}{article}{
      author={Culiuc, Amalia},
      author={Kesler, Robert},
      author={Lacey, Michael~T.},
       title={Sparse bounds for the discrete cubic {H}ilbert transform},
        date={2019},
        ISSN={2157-5045},
     journal={Anal. PDE},
      volume={12},
      number={5},
       pages={1259\ndash 1272},
  url={https://doi-org.prx.library.gatech.edu/10.2140/apde.2019.12.1259},
      review={\MR{3892403}},
}

\bib{2020arXiv200211758D}{article}{
      author={{Dasu}, Shival},
      author={{Demeter}, Ciprian},
      author={{Langowski}, Bartosz},
       title={{Sharp $l^p$-improving estimates for the discrete paraboloid}},
        date={2020-02},
     journal={arXiv e-prints},
       pages={arXiv:2002.11758},
      eprint={2002.11758},
}

\bib{MR3897012}{article}{
      author={Frey, Dorothee},
      author={Nieraeth, Bas},
       title={Weak and strong type {$A_1-A_\infty$} estimates for sparsely
  dominated operators},
        date={2019},
        ISSN={1050-6926},
     journal={J. Geom. Anal.},
      volume={29},
      number={1},
       pages={247\ndash 282},
         url={https://doi.org/10.1007/s12220-018-9989-2},
      review={\MR{3897012}},
}

\bib{MR4106792}{article}{
      author={Han, Rui},
      author={Kova\v{c}, Vjekoslav},
      author={Lacey, Michael~T.},
      author={Madrid, Jos\'{e}},
      author={Yang, Fan},
       title={Improving estimates for discrete polynomial averages},
        date={2020},
        ISSN={1069-5869},
     journal={J. Fourier Anal. Appl.},
      volume={26},
      number={3},
       pages={Paper No. 42, 11},
         url={https://doi.org/10.1007/s00041-020-09748-4},
      review={\MR{4106792}},
}

\bib{MR4072599}{article}{
      author={Han, Rui},
      author={Krause, Ben},
      author={Lacey, Michael~T.},
      author={Yang, Fan},
       title={Averages along the primes: improving and sparse bounds},
        date={2020},
     journal={Concr. Oper.},
      volume={7},
      number={1},
       pages={45\ndash 54},
         url={https://doi.org/10.1515/conop-2020-0003},
      review={\MR{4072599}},
}

\bib{2019arXiv190705734H}{article}{
      author={{Han}, Rui},
      author={{Lacey}, Michael~T},
      author={{Yang}, Fan},
       title={{Averages along the Square Integers: $\ell^p$ improving and
  Sparse Inequalities}},
        date={2019-07},
     journal={arXiv e-prints},
       pages={arXiv:1907.05734},
      eprint={1907.05734},
}

\bib{MR4149830}{article}{
   author={Hughes, Kevin},
   title={$\ell^p$-improving for discrete spherical averages},
   language={English, with English and French summaries},
   journal={Ann. H. Lebesgue},
   volume={3},
   date={2020},
   pages={959--980},
   review={\MR{4149830}},
   doi={10.5802/ahl.50},
}

\bib{MR2053347}{article}{
      author={Ionescu, Alexandru~D.},
       title={An endpoint estimate for the discrete spherical maximal
  function},
        date={2004},
        ISSN={0002-9939},
     journal={Proc. Amer. Math. Soc.},
      volume={132},
      number={5},
       pages={1411\ndash 1417},
         url={https://doi.org/10.1090/S0002-9939-03-07207-1},
      review={\MR{2053347}},
}

\bib{MR2188130}{article}{
      author={Ionescu, Alexandru~D.},
      author={Wainger, Stephen},
       title={{$L^p$} boundedness of discrete singular {R}adon transforms},
        date={2006},
        ISSN={0894-0347},
     journal={J. Amer. Math. Soc.},
      volume={19},
      number={2},
       pages={357\ndash 383},
         url={https://doi.org/10.1090/S0894-0347-05-00508-4},
      review={\MR{2188130}},
}

\bib{MR2061214}{book}{
      author={Iwaniec, Henryk},
      author={Kowalski, Emmanuel},
       title={Analytic number theory},
      series={American Mathematical Society Colloquium Publications},
   publisher={American Mathematical Society, Providence, RI},
        date={2004},
      volume={53},
        ISBN={0-8218-3633-1},
         url={https://doi.org/10.1090/coll/053},
      review={\MR{2061214}},
}

\bib{MR4064582}{article}{
      author={Kesler, R.},
      author={Lacey, M.~T.},
       title={{$\ell^p$}-{I}mproving {I}nequalities for {D}iscrete {S}pherical
  {A}verages},
        date={2020},
        ISSN={0133-3852},
     journal={Anal. Math.},
      volume={46},
      number={1},
       pages={85\ndash 95},
         url={https://doi.org/10.1007/s10476-020-0019-9},
      review={\MR{4064582}},
}

\bib{MR4041278}{article}{
      author={Kesler, Robert},
      author={Lacey, Michael~T.},
      author={Mena, Dar\'{\i}o},
       title={Sparse bounds for the discrete spherical maximal functions},
        date={2020},
        ISSN={2578-5885},
     journal={Pure Appl. Anal.},
      volume={2},
      number={1},
       pages={75\ndash 92},
         url={https://doi.org/10.2140/paa.2020.2.75},
      review={\MR{4041278}},
}

\bib{2020arXiv200800857K}{article}{
      author={{Krause}, Ben},
      author={{Mirek}, Mariusz},
      author={{Tao}, Terence},
       title={{Pointwise ergodic theorems for non-conventional bilinear
  polynomial averages}},
        date={2020-08},
     journal={arXiv e-prints},
       pages={arXiv:2008.00857},
      eprint={2008.00857},
}

\bib{MR1888798}{article}{
   author={Magyar, A.},
   author={Stein, E. M.},
   author={Wainger, S.},
   title={Discrete analogues in harmonic analysis: spherical averages},
   journal={Ann. of Math. (2)},
   volume={155},
   date={2002},
   number={1},
   pages={189--208},
   issn={0003-486X},
   review={\MR{1888798}},
   doi={10.2307/3062154},
}

\bib{MR3375866}{article}{
      author={Mirek, Mariusz},
       title={Roth's theorem in the {P}iatetski-{S}hapiro primes},
        date={2015},
        ISSN={0213-2230},
     journal={Rev. Mat. Iberoam.},
      volume={31},
      number={2},
       pages={617\ndash 656},
         url={https://doi.org/10.4171/RMI/848},
      review={\MR{3375866}},
}

\bib{MR3370012}{article}{
      author={Mirek, Mariusz},
      author={Trojan, Bartosz},
       title={Cotlar's ergodic theorem along the prime numbers},
        date={2015},
        ISSN={1069-5869},
     journal={J. Fourier Anal. Appl.},
      volume={21},
      number={4},
       pages={822\ndash 848},
  url={https://doi-org.prx.library.gatech.edu/10.1007/s00041-015-9388-z},
      review={\MR{3370012}},
}

\bib{2018arXiv181200850P}{article}{
      author={{Pereyra}, Mar{\'\i}a~Cristina},
       title={{Dyadic harmonic analysis and weighted inequalities: the sparse
  revolution}},
        date={2018-12},
     journal={arXiv e-prints},
       pages={arXiv:1812.00850},
      eprint={1812.00850},
}

\bib{MR241885}{article}{
      author={Sj\"{o}lin, Per},
       title={An inequality of {P}aley and convergence a.e. of
  {W}alsh-{F}ourier series},
        date={1969},
        ISSN={0004-2080},
     journal={Ark. Mat.},
      volume={7},
       pages={551\ndash 570},
         url={https://doi.org/10.1007/BF02590894},
      review={\MR{241885}},
}

\bib{2020arXiv200805066T}{article}{
      author={{Tao}, Terence},
       title={{The Ionescu--Wainger multiplier theorem and the adeles}},
        date={2020-08},
     journal={arXiv e-prints},
       pages={arXiv:2008.05066},
      eprint={2008.05066},
}

\bib{MR4029173}{article}{
   author={Trojan, Bartosz},
   title={Endpoint estimates for the maximal function over prime numbers},
   journal={J. Fourier Anal. Appl.},
   volume={25},
   date={2019},
   number={6},
   pages={3123--3153},
   issn={1069-5869},
   review={\MR{4029173}},
   doi={10.1007/s00041-019-09695-9},
}

\bib{MR0062138}{book}{
      author={Vinogradov, I.~M.},
       title={Elements of number theory},
   publisher={Dover Publications, Inc., New York},
        date={1954},
        note={Translated by S. Kravetz},
      review={\MR{0062138}},
}

\bib{MR995574}{article}{
      author={Wierdl, M\'{a}t\'{e}},
       title={Pointwise ergodic theorem along the prime numbers},
        date={1988},
        ISSN={0021-2172},
     journal={Israel J. Math.},
      volume={64},
      number={3},
       pages={315\ndash 336 (1989)},
         url={https://doi-org.prx.library.gatech.edu/10.1007/BF02882425},
      review={\MR{995574}},
}

\end{biblist}
\end{bibdiv}

\end{document}